\newcommand{\vect}[1]{\overset{\rightharpoonup}{#1}}
\newif\ifshowvc
\def\blfootnote{\xdef\@thefnmark{}\@footnotetext}
\theoremstyle{plain}
\newtheorem{theorem}{Theorem}[section]
\newtheorem{corollary}[theorem]{Corollary}
\newtheorem{lemma}[theorem]{Lemma}
\theoremstyle{definition}
\newtheorem{definition}[theorem]{Definition}
\theoremstyle{remark}
\newtheorem{remark}[theorem]{Remark}
\newcommand{\beq}{\begin{align*}}
\newcommand{\eeq}{\end{align*}}
\newcommand{\ppp}{\[\begin{aligned}}
\newcommand{\ooo}{\end{aligned}\]}
\title{On Taylor series of zeros of complex-exponent polynomials}
\author{Mario DeFranco}
\begin{document}

\maketitle

\abstract{We prove a factorization formula for the Taylor series coefficients of a zero of a polynomial as a function of the polynomial's coefficients. This result extends to more general functions which we call ``complex-exponent polynomials". To prove this formula, we prove theorems about derivations on commutative rings. We also show that, when applied to polynomials, our formula recovers the results of Sturmfels obtained with GKZ systems (``Solving algebraic equations in terms of $\mathcal{A}$-hypergeometric series". Discrete Math. 210 (2000) pp. 171-181) }.

\section{Introduction} \label{Intro}

\subsection{Background}

Given a $d$-tuple $\vect{a}$ of complex numbers 
\[
\vect{a}=(a_1, \ldots, a_{d}),
\]
 a polynomial $p(z)$ is a function 
\begin{align*}
&p\colon  \mathbb{C} \rightarrow \mathbb{C}\\
&z  \mapsto \sum_{k=1}^{d} a_k z^{k-1}. 
\end{align*}
If $a_d\neq 0$, then $p(z)$ has degree $d-1$. A zero (or root) of $p(z)$ is a number $\phi \in \mathbb{C}$ such that 
\[
p(\phi)=0.
\]
As $\phi$ depends on the coefficients $a_1, \ldots, a_d$, we may think of $\phi$ as a function of $\vect{a}$
\[
\phi = \phi(\vect{a}).
\]

There has been much work to describe this dependence on $\vect{a}$. For degrees 1 through 4 there are the linear, quadratic, cubic, and quartic formulas, respectively, that give all zeros of $p(z)$ in $\mathbb{C}$. These are known as the solutions by radicals; that is, $\phi(\vect{a})$ is a function consisting of a finite number of applications of addition, subtraction, multiplication, division, and taking $n$-th roots (raising to the exponent $n^{-1}$), where $n$ is a positive integer. Knowledge of the quadratic formula stems from antiquity; for example, Babylonian cuneiform tablets from the second millennium B.C. describe methods to solve a quadratic equation (see Berriman \cite{Berriman}). The cubic formula was discovered in the sixteenth century A.D. (see Boyer and Merzbach \cite{Boyer} and Guilbeau \cite{Guilbeau}), due to Scipione Del Ferro and also being attributed to Niccol\`o Tartaglia and Gerolamo Cardano, whose book \textit{Ars Magna}  \cite{Cardano}, published in 1545, contains this formula. This book also contains the quartic formula discovered by Lodovico Ferrari (see O'Connor and Robertson \cite{O'Connor}). A solution by radicals does not exist for degree 5 or greater by the Abel-Ruffini theorem (\cite{Abel}, \cite{Ruffini}) proved completely by Niels Henrik Abel in 1824. Evariste Galois also proved this fact in 1831 (see \cite{Galois} and Radloff \cite{Radloff}). 

Nevertheless, the function $\phi(\vect{a})$ can still be described using infinite series for a polynomial of any degree. To study these series, Birkeland \cite{Birkeland} used Lagrange inversion; Mayr \cite{Mayr} used  a system of differential equations; and Herrera \cite{Herrera} used reversion of Taylor series. Sturmfels \cite{Sturmfels2000} considers the zero as a solution to a system of $\mathcal{A}$-hypergeometric differential equations introduced by Gel'fand, Kapranov, and Zelevinsky \cite{Gelfand89}, \cite{Gelfand90}. McDonald \cite{McDonald} and Passare and Tsikh \cite{Passare} also study these hypergeometric series.

In this paper we calculate the Taylor series coefficients of $\phi(\vect{a})$ directly and prove the main result (Theorem \ref{t main}) that they have a factorization formula. This theorem applies not only to polynomials but also to more general functions which we call ``complex-exponent polynomials" defined in section \ref{s statement}. We prove Theorem \ref{t main} in section \ref{s proof}; we use lemmas about Stirling numbers of the first and second kind from section \ref{s Stirling}, and also theorems involving sums over set partitions and subsets from section \ref{s set theorems}. In section \ref{s recovery}, we prove in Corollary \ref{c recovery} that this formula, when applied to polynomials, recovers the results of Sturmfels \cite{Sturmfels2000}. 

In section \ref{s set theorems}, the final step in the chain of reasoning is Theorem \ref{t deriv set} about an arbitrary derivation on a commutative ring. We prove another such theorem (Theorem \ref{t s tau}) in section \ref{s alt} which is used to give an alternative proof of Theorem \ref{t main} in the case $\beta=1$. 

\subsection{Statement of results}\label{s statement}

We define a complex-exponent polynomial using essentially the same definition above for a polynomial but allowing the exponents of $z$ to be complex numbers instead of natural numbers. This complex exponentiation is defined in the conventional way by changing the domain from $\mathbb{C}$ to the Riemann surface $L$ for the logarithm. This surface $L$ is parametrized by 
\[
L=\{(r,\theta,n)\colon r \in \mathbb{R}^+, \theta \in (-\pi, \pi], n \in \mathbb{Z}\}.
\]
Then for $z \in L$ corresponding to $(r,\theta,n)$, define $z^\gamma$ by 
\[
z^\gamma=e^{\gamma \ln(r)+i\gamma \theta+ 2 \pi i n \gamma} \in \mathbb{C}.
\]
\begin{definition}
Let $\vect{a}$ and $\vect{\gamma}$ be two $d$-tuples of complex numbers 
\begin{align*}
\vect{a} = (a_1, \ldots, a_d) \\ 
\vect{\gamma} = (\gamma_1, \ldots, \gamma_d).
\end{align*}
Define a complex-exponent polynomial $p(z;\vect{a},\vect{\gamma})$ to be a function of the form
\[
p: L \rightarrow \mathbb{C}
\]
\[
z \mapsto \sum_{k=1}^d a_k z^{\gamma_k}
\]
which we abbreviate as $p(z)$.
\end{definition}

Now we will consider a complex-exponent polynomial $f(z)$ which is a ``modification" of another complex-exponent polynomial $g(z)$. Specifically,
suppose we fix a $d$-tuple $\vect{\gamma}$ and also non-zero complex numbers $b$ and $\beta$. Then let $g(z)$ be the complex-exponent polynomial
\[
g(z) = 1 + b z^\beta.
\] 
We call $g(z)$ the ``base function".

\begin{definition}
Define $f(z; g, \vect{a}, \vect{\gamma})$ to be the complex-exponent polynomial 
\begin{align*}
z &\mapsto g(z)+\sum_{i=1}^d a_i z^{\gamma_i}
\end{align*}
which we abbreviate as $f(z)$.
\end{definition}

We next show how to express the Taylor series of a zero of $f(z)$ using a zero of $g(z)$. 

Let
\[
b= r_0 e^{i \theta_0}
\]
 for some $r_0>0$ and $\theta_0 \in (-\pi,\pi]$, and  
 \[
 \mathrm{Re}(\beta) = \beta_1 \,\text{ and }   \mathrm{Im}(\beta) = \beta_2.
 \]
Now for each $m \in \mathbb{Z}$, it is straightforward to check that $g(z)$ has a simple zero in $L$ corresponding to $(r,\theta,n)$
\begin{equation} \label{alpha formula}
(r,\theta,n)=(e^{\frac{\beta_2( (2m+1)\pi -\theta_0) - \beta_1 \ln(r_0)}{|\beta|^2}}, \frac{\beta_1((2m+1)\pi - \theta_0) +\beta_2 \ln(r_0)}{|\beta|^2} -2\pi n, n)
\end{equation}
where $n$ is chosen so that $\theta \in (-\pi, \pi]$. Let $\alpha$ denote one of these zeros. In what follows we will need not the formula \eqref{alpha formula} for $\alpha$, only the fact that $\alpha$ exists. 

We thus consider how this zero $\alpha$ varies as we vary $a_i$. That is, let $a_i$ be the coordinate variables of $\mathbb{C}^d$, and suppose there is a neighborhood $U$ of $\vect{0}$ in $\mathbb{C}^d$ and a function $\phi(\vect{a})$
\[
\phi \colon U \rightarrow L
\]
differentiable in the variables $a_1, \ldots, a_d$ at $\vect{0}$ for all orders such that
\begin{align}
f(\phi(\vect{a}); g, \vect{a}, \vect{\gamma})&=0 \text{ for all } \vect{a} \in U  \label{f=0}\\ 
\phi(\vect{0}) &= \alpha \label{base point}.
\end{align}
Then the above two equations are sufficient to calculate the Taylor series coefficients of $\phi(\vect{a})$ about $\vect{0}$.

Let $\vect{n}$ denote a $d$-tuple of non-negative integers 
\[
\vect{n} = (n_1, \ldots, n_d)
\]
and denote  
\[
\Sigma \vect{n} = \sum_{i=1}^d n_i.
\]
Let $\partial_{\vect{n}}$ denote the partial derivative operator
\[
\partial_{\vect{n}} = \prod_{i=1}^d (\frac{\partial}{\partial a_i})^{n_i}
\]
and for any function 
\[
\psi \colon : U \rightarrow \mathbb{C}
\]
denote
\[
\partial(\psi, I)=\partial_{\vect{n}} \psi(\vect{a}) |_{\vect{0}}.
\]

Our main result is the next theorem.
\begin{theorem} \label{t main}
With the above notation and when $\Sigma \vect{n} \geq 1$,
\[
\partial_{\vect{n}} \phi(\vect{a}) |_{\vect{0}} = -\frac{\alpha^{1 + \sum_{i=1}^d n_i(\gamma_i-1)}}{g'(\alpha)^{\Sigma \vect{n}}} \prod_{i=1}^{\Sigma \vect{n} -1} (-1+i \beta - \sum_{i=1}^d n_i\gamma_i)
\]
where 
\[
g'(\alpha) = b \beta \alpha^{\beta -1}.
\]
\end{theorem}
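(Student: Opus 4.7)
The plan is to prove the formula by induction on $\Sigma\vect{n}$, differentiating the implicit equation $f(\phi(\vect{a}))=0$. The base case $\Sigma\vect{n}=1$ is immediate: differentiating once with respect to $a_j$ and evaluating at $\vect{0}$ gives $g'(\alpha)\partial_j\phi|_{\vect{0}}+\alpha^{\gamma_j}=0$, so $\partial_j\phi|_{\vect{0}}=-\alpha^{\gamma_j}/g'(\alpha)$, which matches the claim since the product in that case is empty.

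Before iterating, I would normalize the equation. Using the fact that $g(\alpha)=0$, equivalently $b\alpha^\beta=-1$, set $w=\phi/\alpha$ and $c_i=a_i\alpha^{\gamma_i}$. Then $f(\phi)=0$ becomes
\[
w^\beta = 1 + \sum_{i=1}^d c_i w^{\gamma_i}, \qquad w(\vect{0})=1,
\]
and it suffices to prove an analogous formula for the Taylor coefficients of $w$ in the $c_i$-variables and transport back using $\partial/\partial a_i=\alpha^{\gamma_i}\partial/\partial c_i$ together with $\alpha g'(\alpha)=b\beta\alpha^\beta=-\beta$. Differentiating the normalized equation once in $c_j$ gives
\[
\Bigl(\beta w^{\beta-1}-\sum_i c_i\gamma_i w^{\gamma_i-1}\Bigr)\partial_j w = w^{\gamma_j},
\]
and iterating this, a multivariate Fa\`a di Bruno expansion expresses higher derivatives of $w$ as sums over set partitions of the index multiset encoded by $\vect{n}$, with summands that are products of lower-order derivatives multiplied by Pochhammer-like coefficients in $\beta$ and the $\gamma_i$. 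The role of the machinery developed in Sections \ref{s Stirling} and \ref{s set theorems}, in particular the Stirling-number identities and Theorem \ref{t deriv set} on derivations on commutative rings, is to collapse those set-partition sums into the single product $\prod_{i=1}^{\Sigma\vect{n}-1}(-1+i\beta-\sum_i n_i\gamma_i)$.

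The main obstacle is precisely this collapse. Each differentiation of the identity for $\partial_{\vect{n}}w$ at $\vect{c}=\vect{0}$ contributes one new factor of the form $(-1+k\beta-\sum_i n_i\gamma_i)$ for some integer $k$, but it simultaneously reshuffles the set-partition sum in a non-obvious way, so a naive induction on $\Sigma\vect{n}$ does not close up term-by-term. The abstract derivation framework of Section \ref{s set theorems} appears to be exactly what is needed: by treating the relation $w^\beta=1+\sum_i c_i w^{\gamma_i}$ inside a commutative ring equipped with an arbitrary derivation, one can track how the partition sums transform under repeated differentiation in a basis-free way and identify the telescoping that produces the product. Specializing the resulting identity to the ring generated by $w$ and its complex powers, and then transporting back from $w$ to $\phi$ via the change of variables above, should yield the stated factorization.
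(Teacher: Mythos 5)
Your skeleton matches the paper's: the base case is handled identically, and the paper likewise proceeds by induction on the total derivative order, expanding $\partial_{\vect{n}}(f\circ\phi)=0$ via a Fa\`a di Bruno sum over multiset partitions and then collapsing that sum. Your normalization $w=\phi/\alpha$, $c_i=a_i\alpha^{\gamma_i}$ is a harmless (and mildly clarifying) reparametrization that the paper does not use. However, the proposal stops exactly where the real work begins, and you say so yourself: ``the main obstacle is precisely this collapse,'' and the derivation framework ``appears to be exactly what is needed'' and ``should yield'' the factorization. That collapse is not a routine application of Theorem \ref{t deriv set}; it is the content of Theorem \ref{t nu}, which converts $\nu^{k-1}\sum_{s\in S(N,k)}\prod_i(\nu-1+\sum_{m\in s_i}x_m)_{|s_i|-1}$ into a finite-difference (Newton-series) coefficient, after which Lemma \ref{Newton series} resums over $k$ to produce the single falling factorial $(\beta^{-1}\sum_m\gamma_m)_{|I|-1}$. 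Proving Theorem \ref{t nu} in turn requires Theorem \ref{t Stirling set}, Theorem \ref{t deriv set}, and several Stirling-number lemmas. None of this is supplied or sketched in enough detail to verify, so the proof has a genuine gap at its central step.

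Two further points. First, your picture that ``each differentiation contributes one new factor of the form $(-1+k\beta-\sum_i n_i\gamma_i)$'' and that one should look for a ``telescoping'' is not how the product actually arises: in the paper the entire product appears at once from the induction hypothesis applied to all lower-order blocks, and the new factor emerges only after the Newton-series resummation; there is no term-by-term telescoping to find, which is consistent with your observation that a naive induction does not close up. Second, even in your normalized equation $w^\beta=1+\sum_i c_iw^{\gamma_i}$, differentiating the left side $|I|\geq 2$ times produces its own partition sums weighted by $(\beta)_k$ (the analogue of line \eqref{phi g}), which require a separate Newton-series evaluation before they combine with the right-hand contributions; your proposal does not address these terms. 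To complete the argument you would need to state and prove the partition-collapse identity (Theorem \ref{t nu} or an equivalent) and carry out both resummations explicitly.
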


\section{Proof of Taylor Series Coefficient Formula}\label{s proof}

We re-express Theorem \ref{t main} as Theorem \ref{t main multiset} using different notation which will be used in its proof.
First we present notation for multisets and multiset partitions. For an integer $M\geq 0$, we let $[1,M]$ denote 
\[
[1,M] = \{i \in \mathbb{Z} \colon 1 \leq i \leq M\}.
\]

\begin{definition}
For a positive integer $N$, define an ordered multiset $I$ of $[1,d]$ to be an $N$-tuple of integers 
\[
I = (I(1), \ldots, I(N))
\]
with $1 \leq I(i) \leq d$. We say that the order $|I|$ is $N$. We define the multiplicity $\mathrm{multiplicity}(I, n)$ of $n$ in $I$ as the number of indices $i$ such that $I(i)=n$.
Let $\mathrm{Multiset}(d)$ denote the set of these ordered multisets. 

For a positive integer $k$, define a set partition $s$ of $[1,N]$ with $k$ parts to be a $k$-tuple
\[
s = (s_1, \ldots, s_k)
\]
where $s_i$ are pairwise disjoint non-empty subsets of $[1,N]$, 
\[
\bigcup_{i=1}^k s_i = [1,N],
\]
and 
\[
\min(s_i) < \min(s_j) \text{ for } i <j.
\]
We also write a set $s_i$ as an $m$-tuple
\[
s_i = (s_i(1), \ldots, s_i(m))
\]
where $m=|s_i|$ and 
\[
s_i(j) < s_i(l) \text{ for } j <l.
\]
Let $S(N,k)$ denote the set of such $s$. If $H$ is any finite set of integers, we similarly denote $S(H,k)$ to be the set of all set partitions of $H$ into $k$ non-empty parts.

For a multiset $I$ and a set partition $s \in S(|I|,k)$, define a multiset partition $J$ of $I$ with $k$ parts to be a $k$-tuple
 \[
 J = (J_1, \ldots, J_k)
 \]
 where $J \in \mathrm{Multiset}(d)$ is given by 
 \[
 J_i = (I(s_i(1)), \ldots, I(s_i(m)))
 \]
 where $m = |s_i|$. Thus the multiset partitions of $I$ with $k$ parts are in bijection with the set partitions in $S(|I|, k)$. 
Let $\mathrm{Parts}(I,k)$ denote the set of multiset partitions of $I$. 
 We let $ I(\hat{h})$ denote the ordered multiset obtained from $I$ by removing the element at the $h$-th index:
  \[
 I(\hat{h}) = (I(1), \ldots, I(h-1), I(h+1), \ldots,I(N)).
 \]
 We use the notation 
 \[
 \sum_{m \in I} \gamma_m = \sum_{i=1}^N \gamma_{I(i)}
 \]
where $N = |I|$.
\end{definition} 
We also use the falling factorial applied to indeterminates, where ``indeterminate" refers to an arbitrary element of some polynomial ring over $\mathbb{Z}$. 
\begin{definition}
For an integer $k \geq 0$ and an indeterminate $x$, define the falling factorial 
\[
(x)_k = \prod_{i=1}^k(x-i+1).
\]
\end{definition}

\begin{theorem} \label{t main multiset}
With the above notation and an ordered multiset $I \in  \mathrm{Multiset}(d)$ with $|I|\geq 1$,
\[
\partial(\phi, I) = -\frac{\alpha^{1 + \sum_{m\in I} (\gamma_{m}-1)}}{g'(\alpha)^{|I|} } (-\beta)^{|I|-1}( \beta^{-1}-1+\beta^{-1}\sum_{m \in I} \gamma_{m} )_{|I|-1}
\]
where 
\[
g'(\alpha) = b \beta \alpha^{\beta -1}.
\]
\end{theorem}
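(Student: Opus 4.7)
The plan is to apply the partial derivative operator $\partial_I$ to the defining identity
\[
0 = f(\phi(\vect{a})) = g(\phi(\vect{a})) + \sum_{j=1}^d a_j\, \phi(\vect{a})^{\gamma_j}
\]
and solve the resulting equation at $\vect{a}=\vect{0}$ for $\partial(\phi, I)$ by induction on $|I|$. For the composite $g(\phi)$, the multivariate Fa\`a di Bruno formula expresses $\partial_I g(\phi)|_{\vect{0}}$ as a sum over set partitions $s\in S(|I|,k)$ of products $g^{(k)}(\alpha)\prod_{j=1}^k \partial(\phi, J_j(s))$. Because $g(z)=1+bz^\beta$ and $b\alpha^\beta=-1$, the derivatives collapse cleanly into $g^{(k)}(\alpha)=-(\beta)_k \alpha^{-k}$, and in particular $g'(\alpha)=-\beta/\alpha$. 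For the part $\sum_j a_j \phi^{\gamma_j}$, the product rule applied at $\vect{0}$ forces exactly one derivative in $\partial_I$ to eliminate some $a_j$, so the surviving contribution is $\sum_{h=1}^{|I|} \partial_{I(\hat h)}\phi^{\gamma_{I(h)}}|_{\vect{0}}$; a second Fa\`a di Bruno expansion of $\phi^{\gamma_{I(h)}}$ introduces falling-factorial weights $(\gamma_{I(h)})_k$.

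Isolating the $k=1$ contribution to the Fa\`a di Bruno expansion of $g(\phi)$ peels off a term $g'(\alpha)\partial(\phi, I)$, and the equation rearranges to the recursion
\[
g'(\alpha)\partial(\phi, I) = -\sum_{k=2}^{|I|} g^{(k)}(\alpha)\!\!\sum_{s\in S(|I|,k)}\prod_{j=1}^k \partial(\phi, J_j(s)) \;-\; \sum_{h=1}^{|I|} \partial_{I(\hat h)}\phi^{\gamma_{I(h)}}\Big|_{\vect{0}}.
\]
Together with the base case $|I|=1$, which one computes directly as $\partial(\phi,(j))=-\alpha^{\gamma_j}/g'(\alpha)$ and which agrees with the conjectured formula (whose falling-factorial is empty), this recursion determines every $\partial(\phi,I)$ uniquely. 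One can verify the $|I|=2$ case by hand as a sanity check: the two competing contributions, one from $g''(\alpha)$ and one from the linear coupling, combine to give exactly $-\alpha^{\gamma_{I(1)}+\gamma_{I(2)}+1}/(\beta g'(\alpha)^2)\cdot\bigl(\beta^{-1}-1+\beta^{-1}(\gamma_{I(1)}+\gamma_{I(2)})\bigr)$.

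To run the induction it suffices to substitute the conjectured closed form for each lower-order $\partial(\phi, J_j(s))$ appearing on the right-hand side and show the sum reassembles into the conjectured formula for $\partial(\phi, I)$. After pulling out the common factor $\alpha^{1+\sum_{m\in I}(\gamma_m-1)}/g'(\alpha)^{|I|}$ and using $g'(\alpha)=-\beta/\alpha$ to convert stray powers of $\alpha$ and $\beta$, the verification reduces to a purely combinatorial identity of the shape
\[
\sum_{s\in S(|I|,k)}\!\!(\text{weights in }(\beta)_k\text{ and }(\gamma_{I(h)})_k)\prod_{j}\bigl(\beta^{-1}-1+\beta^{-1}\!\!\sum_{m\in J_j(s)}\!\!\gamma_m\bigr)_{|J_j(s)|-1} \;=\; \bigl(\beta^{-1}-1+\beta^{-1}\!\!\sum_{m\in I}\!\gamma_m\bigr)_{|I|-1}.
\]
This is exactly the shape of identity furnished by Theorem~\ref{t deriv set}, once the Stirling number lemmas of Section~\ref{s Stirling} are invoked to interconvert the $(\beta)_k$ weights coming from $g^{(k)}(\alpha)$ and the $(\gamma_{I(h)})_k$ weights coming from $(\phi^{\gamma_{I(h)}})^{(k)}$.

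The main obstacle is this final combinatorial collapse. The recursion naturally produces a double summation — over a distinguished index $h\in [1,|I|]$ and over set partitions of the remaining positions — yet the target formula exhibits no such distinguished index. Showing that the $h$-dependence telescopes, and that the two separate falling-factorial weights merge into the single falling factorial in $\beta^{-1}-1+\beta^{-1}\sum_{m\in I}\gamma_m$ of length $|I|-1$, is precisely the content of the derivation-theoretic identity of Theorem~\ref{t deriv set} combined with the Stirling manipulations. Once that identity is in place, the induction closes immediately.
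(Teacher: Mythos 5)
Your overall architecture matches the paper's: differentiate the defining identity $f(\phi(\vect a))=0$, expand with a multivariate Fa\`a di Bruno formula, split into a term $g'(\alpha)\partial(\phi,I)$ plus lower-order contributions, and close by induction. The base case and the $|I|=2$ sanity check are also handled correctly, and your observation that $b\alpha^\beta=-1$ collapses $g^{(k)}(\alpha)$ to $-(\beta)_k\alpha^{-k}$ is the same simplification the paper makes implicitly. That much is sound.

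The genuine gap is in the ``combinatorial collapse,'' which you describe as ``exactly the shape of identity furnished by Theorem~\ref{t deriv set}, once the Stirling number lemmas \dots\ are invoked.'' That identification is wrong, and the omission matters. After substituting the induction hypothesis, one faces a sum over a distinguished index $h$ and over multiset partitions of $I(\hat h)$, weighted separately by $(\gamma_{I(h)})_k$ (from the linear-in-$a$ piece) and by $(\beta)_k$ (from $g^{(k)}$). Theorem~\ref{t deriv set} alone does not convert this into the claimed single falling factorial; it is several layers removed. The direct tool in the paper is Theorem~\ref{t nu}, which for fixed $h$ converts the partition sum $(\beta^{-1})^{k-1}\sum_{J\in\mathrm{Parts}(I(\hat h),k)}\prod_i(\beta^{-1}-1+\beta^{-1}\sum_{m\in J_i}\gamma_m)_{|J_i|-1}$ into a finite-difference expression $c_k$, and Theorem~\ref{t nu} is itself proved via Theorem~\ref{t Stirling set}, which in turn reduces (through generating functions, Lemma~\ref{l Stirling gf}) to Theorem~\ref{t deriv set}. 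More importantly, even after Theorem~\ref{t nu}, one is \emph{not} done: the resulting $c_k$ are precisely the coefficients of a Newton forward-difference expansion, and the crucial step that removes the distinguished index $h$ and contracts $\sum_k (\gamma_{I(h)})_k c_k$ into $\gamma_{I(h)}(\beta^{-1}\sum_{m\in I}\gamma_m-1)_{|I|-2}$ --- and likewise contracts the $\beta$-weighted sum --- is Lemma~\ref{Newton series}. Your proposal never mentions Newton series, and no combination of ``Stirling lemmas plus Theorem~\ref{t deriv set}'' substitutes for it: without it, the $h$-sum does not telescope and the two falling-factorial weights do not merge. Finally, the merge is not a single application of one identity; the paper shows that the $\gamma$-contribution produces $(\beta^{-1}\sum_{m\in I}\gamma_m)_{|I|-1}$ while the $\beta$-contribution produces the \emph{difference} $(\beta^{-1}\sum_{m\in I}\gamma_m)_{|I|-1}-(\beta^{-1}-1+\beta^{-1}\sum_{m\in I}\gamma_m)_{|I|-1}$, and the target falling factorial emerges only after an explicit cancellation between the two. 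That cancellation mechanism is a second ingredient your sketch leaves unaccounted for.
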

\begin{proof}
Recall by construction 
\begin{align*}
0 &= f(\phi(\vect{a})) \\ 
 &= g(\phi(\vect{a})) + \sum_{i=1}^d a_i\phi(\vect{a})^{\gamma_i}.
\end{align*}
We apply $\partial_I$ to both sides the equation 
and then set $\vect{a}=\vect{0}$.
If $I = (i)$, then we obtain 
\[
0=g'(\alpha)\partial(\phi,I)+ \alpha^{\gamma_i}.
\]
Solving for $\partial(\phi,I)$ yields 
\[
\partial(\phi,I) = \frac{-\alpha^{\gamma_i}}{g'(\alpha)}.
\]
This proves the theorem when $|I|=1$. 

Now we use induction on $|I|$. Given an $I \in  \mathrm{Multiset}(d)$ with $|I| \geq 2$, assume the theorem is true for all $I' \in  \mathrm{Multiset}(d)$ with $|I'| <|I|$.
Given any function $\psi(\vect{a})$ 
\[
\psi\colon U \rightarrow L,
\]
it follows from the definitions that
\begin{align}
\partial(f \circ \psi,I)  &= \sum_{h=1}^{|I|} \sum_{k=1}^{|I|-1} (\gamma_{I(h)})_k\psi(\vect{0})^{\gamma_{I(h)}-k}\sum_{J \in \mathrm{Parts}(I(\hat{h}), k)}\prod_{i=1}^k \partial(\psi,J_i)) \label{psi f} \\ 
&+  \sum_{k=1}^{|I|} g^{(k)}(\psi(\vect{0})) \sum_{J \in \mathrm{Parts}(I, k)}\prod_{i=1}^k \partial(\psi,J_i). \label{psi g}
\end{align}
For example, a term on the right side of line \eqref{psi f} corresponds to applying the partial derivative $\displaystyle \frac{\partial}{\partial a_{I(h)}}$ to the coefficient $a_{I(h)}$ of $f(z)$, and then applying $k$ other partial derivatives $\displaystyle \frac{\partial}{\partial a_{J_i(1)}}$ to the power of 
\[
\phi(\vect{a})^{\gamma_{I(h)}-i+1}
\]
proceeding from $i=1$ to $i=k$. Each of these $k$ applications by the chain rule results in a factor of 
\[
\frac{\partial \phi(\vect{a})}{\partial a_{J_i(1)}}.
\] 
Every other $m \in J_i$ then corresponds to applying $\displaystyle \frac{\partial}{\partial a_m}$ to this factor. Line \eqref{psi g} arises similarly. 

Now substitute $\phi(\vect{a})$ for $\psi(\vect{a})$. As above, since $f(\phi(\vect{a}))$ is identically zero by construction, so is $\partial(f\circ \phi, I)$ for any $I$. We obtain
\begin{align}
0  &= \sum_{h=1}^{|I|} \sum_{k=1}^{|I|-1} (\gamma_{I(h)})_k\alpha^{\gamma_{I(h)}-k}\sum_{J \in \mathrm{Parts}(I(\hat{h}), k) }\prod_{i=1}^k \partial(\phi,J_i) \label{phi f}\\ 
&+  \sum_{k=2}^{|I|} g^{(k)}(\alpha) \sum_{J \in \mathrm{Parts}(I, k)}\prod_{i=1}^k \partial(\phi,J_i)\label{phi g}\\ 
&+g'(\alpha) \partial(\phi,I) \label{phi I}.
\end{align}

By the induction hypothesis, the right of line \eqref{phi f} becomes 
\begin{align*}
\sum_{h=1}^{|I|} \sum_{k=1}^{|I|-1} (\gamma_{I(h)})_k\alpha^{\gamma_{I(h)}-k}\sum_{J \in \mathrm{Parts}(I(\hat{h}), k)}\prod_{i=1}^k  \frac{-\alpha^{1+\sum_{m \in J_i} (\gamma_m - 1)}}{g'(\alpha)^{|J_i|}}  (-\beta)^{|J_i|-1}(\beta^{-1}-1+\beta^{-1}\sum_{m \in J_i}\gamma_m)_{|J_i|-1}\\ 
\end{align*}
which we simplify to 
\begin{equation}
\frac{\alpha^{1+\sum_{m \in I} (\gamma_m-1)}(-\beta)^{|I|-2} }{g'(\alpha)^{|I|-1}}  \sum_{h=1}^{|I|} \sum_{k=1}^{|I|-1} -(\gamma_{I(h)})_k  (\beta^{-1})^{k-1}\sum_{J \in \mathrm{Parts}(I(\hat{h}), k)}\prod_{i=1}^k (\beta^{-1}-1+\beta^{-1}\sum_{m \in J_i}\gamma_m)_{|J_i|-1}. \label{simplify to}\\ 
\end{equation}
We apply Theorem \ref{t nu} to the quantity 
\begin{equation} \label{t main multiset quantity}
(\beta^{-1})^{k-1}\sum_{ J \in \mathrm{Parts}(I(\hat{h}), k)}\prod_{i=1}^k (\beta^{-1}-1+\beta^{-1}\sum_{m \in J_i}\gamma_m)_{|J_i|-1}
\end{equation}
with $\nu=\beta^{-1}$; $N=|I(\hat{h})|$; and $x_i=\beta^{-1}\gamma_{I(\hat{h})(i)}$ 
to see that the quantity \eqref{t main multiset quantity} is equal to $c_k$, where 
\[
c_k =\frac{1}{(k-1)!}\sum_{r=0}^{k-1}(-1)^{k-1-r} {k-1 \choose r} (\beta^{-1}(r+1) -1+\beta^{-1}\sum_{m\in I(\hat{h})} \gamma_m)_{|I|-2}.
\]
Now the sum 
\[
-\sum_{k=1}^{|I|-1}(\gamma_{I(h)})_kc_k 
\]
is equal to 
\begin{equation}\label{sum gamma_Ih}
-\gamma_{I(h)} \sum_{k=1}^{|I|-1}c_k (\gamma_{I(h)}-1)_{k-1}.
\end{equation}
Noting that $c_k$ are the form of coefficients in a Newton series in $\gamma_{I(h)}$, we apply Lemma \ref{Newton series} with
\[
F(x) = (\beta^{-1}x -1+\beta^{-1}\sum_{m\in I(\hat{h})} \gamma_m)_{|I|-2}
\]
and set $x$ to be $\gamma_{I(h)}$ to obtain that the expression \eqref{sum gamma_Ih} is equal to 
\[
-\gamma_{I(h)}(\beta^{-1}\sum_{m \in I} \gamma_m - 1)_{|I|-2}. 
\]
Summing over $h$, we get that \eqref{simplify to} is equal to 
\begin{equation}\label{phi f final}
\frac{\alpha^{1-|I|+\sum_{m \in I} \gamma_m}(-\beta)^{|I|-1} }{g'(\alpha)^{|I|-1}}(\beta^{-1}\sum_{m \in I}\gamma_m)_{|I|-1}. 
\end{equation}

Now consider the terms at line \eqref{phi g}. Applying
\[
g^{(k)}(\alpha) = b (\beta)_k \alpha^{\beta -k},
\]
we use the induction hypothesis and proceed as done for line \eqref{phi f} to see that the sum of these terms is equal to 
\[
\frac{-b \beta \alpha^\beta (-\beta)^{|I|-1} \alpha^{\sum_{m \in I} (\gamma_m-1)}}{g'(\alpha)^{|I|}} \sum_{k=2}^{|I|} \frac{(\beta-1)_{k-1}}{(k-1)!}\sum_{r=0}^{k-1} (-1)^{k-1-r}{k-1 \choose r}((r+1)\beta^{-1}-1+\beta^{-1} \sum_{m \in I}\gamma_m).
\]
We add and subtract the term corresponding to $k=1$; this term is 
\[
(\beta^{-1}-1+\beta^{-1} \sum_{m \in I}\gamma_m)_{|I|-1}.
\]
The sum including $k=1$ is now a Newton series in $\beta$. Using Lemma \ref{Newton series} it is equal to 
\[
(\beta^{-1} \sum_{m \in I}\gamma_m)_{|I|-1}.
\]
Therefore line \eqref{phi g} is equal to 
\begin{align*}
&\frac{-b \beta \alpha^\beta (-\beta)^{|I|-1} \alpha^{\sum_{m \in I} (\gamma_m-1)}}{g'(\alpha)^{|I|}} \left(  (\beta^{-1} \sum_{m \in I}\gamma_m)_{|I|-1} - (\beta^{-1}-1+\beta^{-1} \sum_{m \in I}\gamma_m)_{|I|-1}\right).
\end{align*}
Combining with result \eqref{phi f final} and simplifying using 
\[
g'(\alpha) = b \beta \alpha^{\beta-1},
\]
yields the equation 
\[
0 = \frac{\alpha^{1+\sum_{m \in I} (\gamma_m-1)} (-\beta)^{|I|-1} }{g'(\alpha)^{|I|-1}} (\beta^{-1}-1+\beta^{-1} \sum_{m \in I}\gamma_m)_{|I|-1}+ g'(\alpha) \partial(\phi,I).
\]
Solving for $\partial(\phi,I)$ completes the proof.
\end{proof}

\begin{lemma} \label{Newton series}
Suppose $F(x)$ is a polynomial of degree $m$. Then
\[
F(x) = \sum_{k=1}^{m+1} \frac{(x-1)_{k-1}}{(k-1)!}\sum_{r=0}^{k-1} (-1)^{k-1-r} {k-1 \choose r}F(r+1).
\]
\end{lemma}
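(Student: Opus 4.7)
The identity is the Newton forward difference interpolation formula for $F$ centered at $x=1$, rewritten so that $\binom{x-1}{k-1}$ appears as $(x-1)_{k-1}/(k-1)!$ and the inner sum is the $(k-1)$-st finite difference $\Delta^{k-1}F(1)$. My plan is to give a direct proof by polynomial interpolation rather than quote Newton's formula.

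The first step is to observe that each $(x-1)_{k-1}$ has degree $k-1$ in $x$, so the right-hand side is a polynomial of degree at most $m$ in $x$. Since $F$ also has degree at most $m$, it will then suffice to verify equality at any $m+1$ distinct values of $x$. I would use the integer points $x = 1, 2, \ldots, m+1$.

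At $x = s$ the key simplification is that $(s-1)_{k-1}/(k-1)! = \binom{s-1}{k-1}$, which vanishes whenever $k > s$, so the outer sum truncates at $k = s$. Substituting $j = k-1$ the right-hand side at $x = s$ becomes
\[
\sum_{j=0}^{s-1}\binom{s-1}{j}\sum_{r=0}^{j}(-1)^{j-r}\binom{j}{r}F(r+1).
\]
Swapping the order of summation and applying the standard identity $\binom{s-1}{j}\binom{j}{r} = \binom{s-1}{r}\binom{s-1-r}{j-r}$ reduces the inner sum over $j$ to $(1-1)^{s-1-r}$ by the binomial theorem. This is zero unless $r = s-1$, and the single surviving term contributes exactly $F(s)$, matching the left-hand side.

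The proof is essentially routine bookkeeping, and there is no serious obstacle. The only points requiring care are (i) confirming that the upper limit $m+1$ in the outer sum is precisely enough to accommodate the $m+1$ interpolation nodes, so the polynomial identity is not asserted on too few points, and (ii) handling the index swap and binomial convolution cleanly so that the alternating sum collapses to a single term.
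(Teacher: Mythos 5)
Your proof is correct and follows essentially the same route as the paper: both arguments note that the two sides are polynomials of degree at most $m$, evaluate at $m+1$ consecutive integers, and collapse the resulting double sum via the identity $\binom{c}{k}\binom{k}{r}=\binom{c-r}{k-r}\binom{c}{r}$ and the alternating binomial sum. The only cosmetic difference is that the paper first proves the unshifted Newton series $P(x)=\sum_k\binom{x}{k}\sum_r(-1)^{k-r}\binom{k}{r}P(r)$ and then substitutes $P(x)=F(x+1)$, whereas you verify the shifted identity directly at $x=1,\dots,m+1$.
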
 
\begin{proof}
The Newton series of a polynomial $P(x)$ of degree $m$ is 
\[
P(x) = \sum_{k=0}^m {x \choose k} \sum_{r=0}^{k}(-1)^{k-r} {k \choose r}P(r). 
\]
We prove this standard formula now. Both sides are polynomials in $x$ of degree $m$. Evaluating $x$ at an integer $c, 0 \leq c \leq m$ on the right side and collecting the terms $P(r)$ for a fixed $r$  give 
\[
P(r)\sum_{k=r}^c (-1)^{k-r}{c \choose k} {k \choose r}. 
\] 
Applying the identity 
\[
{c \choose k} {k \choose r} = {c -r \choose k-r} {c \choose r}
\]
gives 
\[
P(r) {c \choose r} \sum_{k=r}^c (-1)^{k-r}{c -r \choose k-r}. 
\]
which is equal to $0$ if $c\neq r$ and $P(c)$ is $c=r$. Thus both sides are equal at $m+1$ distinct inputs, and thus are equal as polynomials. This proves the Newton series formula.

To prove the lemma, we use $F(x+1)$ for $P(x)$ and use 
\[
{x \choose k} = \frac{(x)_k}{k!},
\]
then substitute $x \mapsto x-1$ and re-index $k \mapsto k-1$. This completes the proof. 
\end{proof}

\section{Stirling number results}\label{s Stirling}
Before proving Theorem \ref{t nu} we present notation for Stirling numbers. 
\begin{definition}
For integers $N,r\geq 0$, define the unsigned Stirling number of the first kind 
\[
{N\brack r }
\]
to be $(-1)^{N-r}$ times the coefficient of $Y^r$ when $(Y)_N$ is expressed in the monomial basis of polynomials in the indeterminate $Y$.
Define the Stirling number of the second kind 
\[
{N\brace r }
\]
to be the coefficient of $(Y)_r$ when $Y^N$ is expressed in the falling factorial basis of polynomials. 

Equivalently we may define the Stirling numbers by the recursive relations for $N \geq r$ by
\begin{equation} \label{Stirling 1 rec} 
{N \brack r}+ N{N \brack r+1} = {N+1 \brack r+1}
\end{equation}

\begin{equation} \label{Stirling 2 rec} 
{N \brace r}+ (r+1){N \brace r+1} = {N+1 \brace r+1}
\end{equation}
and the conditions
\begin{align*}
{N \brace 1} &= {N \brack N}=1 \text{ for } N \geq 1 \\ 
{0\brack 0}&=1\\
{N \brace r} &= {N \brack r}=0 \text{ for } N<r. 
\end{align*}
It is straightforward to show that these two definitions are equivalent.
\end{definition}

\begin{definition}
For integers $N,r\geq0$ and an indeterminate $X$, define 
\[
{N \brack r}_X
\]
to be the coefficient of $Y^r$ in 
\[
(Y+X-1)_N.
\]
If $N<0$ and $r \geq 0$, then let $\displaystyle {N \brack r}_X$ denote 0. 
\end{definition}

\begin{remark}
It follows from the definitions that 
\begin{align}
{N \brack r}_0 &=  (-1)^{N-r}{N+1 \brack r+1} \label{nu Stirling 0}\\ 
{N \brack r}_1 &=(-1)^{N-r} {N \brack r} \label{nu Stirling 1}
\end{align}
\end{remark}

Next we prove lemmas used in Section \ref{s set theorems}.
\begin{lemma} \label{l Stirling 2 a n}
For integers $a,n \geq 0$, 
\begin{equation} \label{Stirling 2 a n}
\frac{1}{a!}\sum_{r=0}^{a}(-1)^{a-r}{a \choose r}(r+1)^{n}  = {n+1\brace a+1}.
\end{equation}
\end{lemma}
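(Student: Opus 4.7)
My plan is to reduce the claim to the classical explicit formula for Stirling numbers of the second kind,
\[
{n\brace k} = \frac{1}{k!}\sum_{j=0}^{k}(-1)^{k-j}\binom{k}{j}j^{n},
\]
and then match the given expression to this formula via a single index shift. I would first establish the classical formula from the paper's definition: evaluating the identity $Y^n = \sum_k {n \brace k}(Y)_k$ at $Y = m$ for each non-negative integer $m$ gives $m^n = \sum_k k!\,{n\brace k}\binom{m}{k}$, and binomial inversion then yields the displayed formula. (Equivalently, inclusion--exclusion on surjections $[n] \to [k]$ gives the same identity, since both sides count $k!\,{n\brace k}$ surjections.)

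With the classical formula in hand, the lemma becomes a bookkeeping step. I would substitute $j = r+1$ in the left-hand side, so that the summation runs over $1 \le j \le a+1$ and the sign $(-1)^{a-r}$ becomes $(-1)^{a+1-j}$. Then I would rewrite $\binom{a}{j-1} = \tfrac{j}{a+1}\binom{a+1}{j}$ so that the extra factor of $j$ promotes $(r+1)^{n} = j^{n}$ to $j^{n+1}$, and combine the prefactors $\tfrac{1}{a!}\cdot\tfrac{1}{a+1}$ into $\tfrac{1}{(a+1)!}$. Since $n+1 \ge 1$, the $j=0$ term vanishes, so the summation can be extended back to $j=0$, and what remains is exactly the classical formula with $n$ replaced by $n+1$ and $k$ by $a+1$, giving ${n+1 \brace a+1}$.

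The computation is essentially mechanical; the only subtle point is tracking the parity of the sign through the index shift $r \mapsto j-1$ and through the increase of $k$ from $a$ to $a+1$, but both shifts flip $(-1)^{a-r}$ consistently with the target sign $(-1)^{(a+1)-j}$. As sanity checks before running the general argument I would verify the boundary cases $a=0$ (both sides equal $1 = {n+1 \brace 1}$) and $n=0$ with $a \ge 1$ (both sides vanish, since ${1 \brace a+1} = 0$), so that the conventions in the paper's definition agree with the classical formula.
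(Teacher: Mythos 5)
Your proof is correct, but it takes a genuinely different route from the paper. You first establish the classical explicit formula $\displaystyle {n\brace k} = \frac{1}{k!}\sum_{j=0}^{k}(-1)^{k-j}\binom{k}{j}j^{n}$ from the falling-factorial definition via binomial inversion (or inclusion--exclusion on surjections), and then convert the lemma's left-hand side into that formula by the substitution $j=r+1$ together with the absorption identity $\binom{a}{j-1}=\frac{j}{a+1}\binom{a+1}{j}$, which cleanly bundles $\frac{1}{a!}\cdot\frac{1}{a+1}$ into $\frac{1}{(a+1)!}$ and promotes $j^{n}$ to $j^{n+1}$; the sign bookkeeping $(-1)^{a-r}=(-1)^{(a+1)-j}$ works out exactly as you say, and the $j=0$ term vanishes since $n+1\geq 1$. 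The paper instead avoids appealing to the classical explicit formula at all: denoting the left-hand side by $F(n+1,a+1)$, it verifies the defining recurrence $(a+1)F(n+1,a+1)+F(n+1,a)=F(n+2,a+1)$ via the elementary binomial identity $(a+1)\binom{a}{r}-a\binom{a+1}{r}=\binom{a}{r}(r+1)$, shows $F(n+1,a+1)=0$ for $n<a$ by writing it as $\frac{1}{a!}\bigl(\frac{d}{dt}t\bigr)^{n}(t-1)^{a}\big|_{t=1}$, and notes $F(n+1,1)=1$, so that $F$ matches the paper's recursive characterization of $\displaystyle {n+1\brace a+1}$. Your argument is shorter once the classical formula is assumed, but it outsources a nontrivial fact; the paper's argument is longer in line count but fully self-contained relative to the recursive definition already stated in the paper and never needs inversion. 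Either is acceptable, and your boundary-case sanity checks at $a=0$ and at $n=0$, $a\geq 1$ are consistent with the paper's conventions.
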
 
\begin{proof}
Denote the left side of equation \eqref{Stirling 2 a n} by $F(n+1,a+1)$. First, we claim that 
\[
(a+1)F(n+1,a+1)+ F(n+1,a) =F(n+2,a+1). 
\]
Given an $r$ with $0 \leq r \leq a$ and taking the coefficients of $(r+1)^n$ in the above equation, we see that the claim is implied by the identity 
\[
(a+1){a \choose r} - a {a+1 \choose r} = {a \choose r}(r+1).
\] 


Second, we claim that $F(n+1,a+1)=0$ when $n<a$. We have that 
\begin{equation} \label{F d dt}
F(n+1,a+1) = \frac{1}{a!}(\frac{d}{dt} t)^n (t-1)^a |_{t=1}
\end{equation}
using the binomial expansion of $(t-1)^a$. Now evaluate the the right side of equation \eqref{F d dt} by applying $\displaystyle \frac{d}{dt}$ to products of $t$ and $(t-1)$; every term has a factor of $(t-1)$ when $n<a$. This proves the second claim.

Third, we have for $n \geq 0$
\[
F(n+1,1)=1.
\]
Thus $F(n+1,a+1)$ satisfies the recursive definition and initial condition of $\displaystyle {n+1\brace a+1}$ when $0 \leq a \leq n$. This completes the proof.
\end{proof}


\begin{lemma} \label{Stirling X Y}
For integers $N\geq r \geq0$ and indeterminates $X$ and $Y$,
 \[
{N \brack r}_{X+Y} = \sum_{i=0}^{N-r}X^i {r+i \choose i}  {N \brack r+i}_Y
\]
\end{lemma}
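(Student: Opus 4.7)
The plan is to unfold the defining identity of $\displaystyle {N\brack r}_{X+Y}$ as the coefficient of $T^r$ in $(T+X+Y-1)_N$ (where I rename the dummy indeterminate of the definition from $Y$ to $T$ to avoid clashing with the $Y$ in the lemma), and to expand this two-variable falling factorial in two stages, first in a single variable $W = T+X$, then by the binomial theorem in $T$ and $X$ separately.

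First I would observe that by the definition of $\displaystyle {N\brack s}_Y$ applied with dummy indeterminate $W$,
\[
(W+Y-1)_N = \sum_{s=0}^{N} {N\brack s}_Y W^s.
\]
Specializing $W=T+X$ gives
\[
(T+X+Y-1)_N = \sum_{s=0}^{N} {N\brack s}_Y (T+X)^s.
\]

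Next I would expand $(T+X)^s$ by the binomial theorem and interchange the order of summation to read off the coefficient of $T^r$:
\[
(T+X+Y-1)_N = \sum_{s=0}^{N}{N\brack s}_Y \sum_{r=0}^{s}\binom{s}{r}X^{s-r}T^r = \sum_{r=0}^{N} T^r \sum_{s=r}^{N}\binom{s}{r}X^{s-r}{N\brack s}_Y.
\]
Re-indexing the inner sum by $i=s-r$ and using $\binom{r+i}{r}=\binom{r+i}{i}$ yields
\[
\text{coefficient of }T^r = \sum_{i=0}^{N-r} X^i \binom{r+i}{i} {N\brack r+i}_Y,
\]
which, by the definition of $\displaystyle {N\brack r}_{X+Y}$ as the coefficient of $T^r$ in $(T+(X+Y)-1)_N$, equals the left-hand side of the lemma.

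There is essentially no obstacle: the proof is a bookkeeping exercise built directly on the definition, and the only subtlety is a notational one, namely renaming the dummy variable of the defining equation so it is not confused with the indeterminate $Y$ appearing in the statement. No properties of Stirling numbers beyond the definition are needed, so the argument fits in a few lines.
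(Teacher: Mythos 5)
Your proof is correct, and it takes a genuinely different route from the paper. The paper works directly with the product expansion
\[
{N \brack r}_{X+Y} \;=\; \sum_{\substack{w\subset[1,N]\\ |w|=N-r}} \ \prod_{i \in w} (X+Y-i),
\]
extracts the coefficient of $X^i$ inside each term as a sum over sub-subsets $w'\subset w$, and then obtains the factor $\binom{r+i}{i}$ by counting how many subsets of size $N-r$ contain a fixed subset of size $N-r-i$. You instead treat $(W+Y-1)_N=\sum_s {N\brack s}_Y\,W^s$ as a polynomial identity in the dummy variable $W$, specialize $W=T+X$, and let the binomial theorem produce $\binom{r+i}{i}$ after re-indexing; this is a Taylor-shift (binomial transform of the coefficient sequence) argument. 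Both proofs are short; yours avoids the combinatorial counting step entirely and arguably makes the appearance of the binomial coefficient more transparent, while the paper's subset formulation is the one it reuses in spirit elsewhere (e.g.\ in Theorem \ref{t Stirling set}). One incidental point in your favor: your sum correctly runs from $i=0$, whereas the paper's final display starts at $i=1$, which is evidently a typo since the $i=0$ term is needed.
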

\begin{proof} 
By definition $\displaystyle {N \brack r}_{X+Y}$ is equal to
\[
\sum_{w\subset[1,N], |w|=N-r } \prod_{i \in w} (X+Y-i).
\]
In a term corresponding to a subset $w$ with order $N-r$, the coefficient of $X^i$ is 
\[
\sum_{w' \subset w, |w'| = N-r-i} \prod_{j \in w'}(Y-j).
\]
Given any subset $v \subset [1,N]$ of order $N-r-i$, there are $\displaystyle {r+i \choose i}$ subsets of $[1,N]$ of order $N-r$ that contain $v$. Therefore 
\begin{align*}
{N \brack r}_{X+Y}  &= \sum_{i=1}^{N-r} X^i  {r+i \choose i} \sum_{v \subset [1,N], |v| = N-r-i} \prod_{j\in v}(Y-j) \\ 
&= \sum_{i=1}^{N-r} X^i  {r+i \choose i} {N \brack r+i}_Y.
\end{align*}
This completes the proof.
\end{proof}
\begin{lemma} \label{l Stirling gf}For an integer $n \geq 0$, as formal power series
\begin{equation}\label{Stirling gf}
\frac{(-\ln(1-t))^n}{n!} = \sum_{k=0}^\infty \frac{{k \brack n}}{k!}t^k. 
\end{equation}
\end{lemma}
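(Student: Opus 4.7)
The plan is to derive the identity from the two-variable generating function
\[
(1+x)^Y = \sum_{N \geq 0} (Y)_N \frac{x^N}{N!},
\]
which is the formal binomial series, and compare it to the exponential expansion of the same function in $Y$. This circumvents any induction on $n$ or $k$ and produces all values of $n$ in a single step.

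First I would recall the defining identity
\[
(Y)_N = \sum_{r=0}^{N} (-1)^{N-r} {N \brack r} Y^r
\]
directly from the definition of $\displaystyle{N \brack r}$ given in the paper; substituting into the binomial series yields
\[
(1+x)^Y = \sum_{N \geq 0} \frac{x^N}{N!} \sum_{r=0}^N (-1)^{N-r} {N \brack r} Y^r.
\]
On the other hand, writing $(1+x)^Y = e^{Y\ln(1+x)}$ and expanding the exponential gives
\[
(1+x)^Y = \sum_{n \geq 0} \frac{(\ln(1+x))^n}{n!} Y^n.
\]
Both sides are formal power series in $Y$ (with coefficients in $\mathbb{Q}[[x]]$), so equating the coefficient of $Y^n$ yields
\[
\frac{(\ln(1+x))^n}{n!} = \sum_{N \geq n} (-1)^{N-n} {N \brack n} \frac{x^N}{N!}.
\]

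Finally I would perform the substitution $x = -t$. Since $(-t)^N = (-1)^N t^N$ and $(-1)^{N-n}(-1)^N = (-1)^{-n} = (-1)^n$, multiplying through by $(-1)^n$ and using $(\ln(1-t))^n = (-1)^n (-\ln(1-t))^n$ produces exactly
\[
\frac{(-\ln(1-t))^n}{n!} = \sum_{k=0}^\infty \frac{{k \brack n}}{k!} t^k,
\]
as required (terms with $k<n$ vanish since $\displaystyle{k \brack n}=0$ there). No step is especially difficult; the only subtlety is bookkeeping the sign $(-1)^{N-n}$ under the substitution $x \mapsto -t$, and making sure the formal identity $(1+x)^Y = e^{Y\ln(1+x)}$ is justified as a formal power series (which is immediate from the definition of formal composition of power series with zero constant term).
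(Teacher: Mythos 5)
Your proof is correct, and it takes a genuinely different route from the paper. The paper proceeds by induction on $n$: it multiplies equation \eqref{Stirling gf} by $\frac{1}{1-t}$, integrates, and then uses the Stirling recursion \eqref{Stirling 1 rec} to verify the claim $\sum_{k=0}^m \frac{{k\brack n}}{k!}=\frac{{m+1\brack n+1}}{m!}$, advancing $n$ to $n+1$. You instead work with the bivariate formal power series $(1+x)^Y$, expanding it once as $\sum_N (Y)_N x^N/N!$ and once as $\sum_n (\ln(1+x))^n Y^n/n!$, then extract the coefficient of $Y^n$ and substitute $x\mapsto -t$. This is the classical ``exponential generating function'' derivation of the Stirling-number generating function; it has the advantage of producing the identity for all $n$ at once, with no induction and no reference to the recursion, at the small cost of having to justify the formal identity $(1+x)^Y=e^{Y\ln(1+x)}$ (which, as you note, is immediate since $\ln(1+x)$ has zero constant term, or alternatively because the two sides agree coefficientwise as polynomials in $Y$ at all non-negative integer values of $Y$). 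The sign bookkeeping $(-1)^{N-n}(-1)^N=(-1)^n$ and the observation that $(-1)^n(\ln(1-t))^n=(-\ln(1-t))^n$ are handled correctly, and the reindexing $N\mapsto k$ together with ${k\brack n}=0$ for $k<n$ gives precisely the stated formula.
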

\begin{proof}
We use induction on $n$. The lemma is true when $n=0$, for then both sides are equal to 1. Assume it is true for some $n \geq 0$. 
Multiply both sides of equation \eqref{Stirling gf} by 
\[
\frac{1}{1-t} = \sum_{k=0}^\infty t^k
\]
and integrate. We thus have 
\begin{equation}\label{Stirling gf int}
\frac{(-\ln(1-t))^{n+1}}{(n+1)!} = \sum_{m=0}^\infty \frac{t^{m+1}}{m+1} \sum_{k=0}^m \frac{{k \brack n}}{k!}. 
\end{equation}
We claim 
\[
\sum_{k=0}^m \frac{{k \brack n}}{k!} = \frac{{m+1 \brack n+1}}{m!}.
\]
From the definition of the Stirling numbers of the first kind 
\[
\frac{{k \brack n}}{k!}= \frac{{k +1\brack n+1}}{k!}-\frac{{k \brack n+1}}{(k-1)!}.
\]
Summing both sides from $k=0$ to $k=m$ and using the fact that 
\[
{0\brack n+1}=0
\]
proves the claim and the induction step. This completes the proof.
\end{proof}

\begin{lemma} \label{l Stirling 2 sum}
For integers $0 \leq r \leq k$,
\begin{equation} \label{Stirling 2 sum}
\sum_{i=r}^{k} { i \brace r}{k \choose i}= {k+1\brace r+1}.
\end{equation}
\end{lemma}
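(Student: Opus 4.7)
My plan is to prove the lemma either combinatorially or by induction on $k$; I sketch both since each is short.

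\emph{Combinatorial route.} The right-hand side ${k+1 \brace r+1}$ counts set partitions of $[1,k+1]$ into $r+1$ nonempty blocks. Fix such a partition, let $B$ be the block containing $k+1$, and let $i$ be the number of elements of $[1,k]$ that lie outside $B$; this forces $r \leq i \leq k$. The $i$ elements outside $B$ can be chosen in $\binom{k}{i}$ ways and then partitioned into the remaining $r$ blocks in ${i \brace r}$ ways, while the other $k-i$ elements join $k+1$ inside $B$. Summing over $i$ recovers the left-hand side, so the two sides agree.

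\emph{Inductive route.} In the more algebraic style of the paper, I would induct on $k$, with the statement taken to hold simultaneously for all $r$. The base case $k=r$ is immediate: both sides equal $1$. For the inductive step, split $\binom{k+1}{i} = \binom{k}{i} + \binom{k}{i-1}$ inside the sum for $k+1$. The piece involving $\binom{k}{i}$ is exactly ${k+1 \brace r+1}$ by the inductive hypothesis. In the piece involving $\binom{k}{i-1}$, reindex by $j = i-1$ and apply the recursion \eqref{Stirling 2 rec} in the form ${j+1 \brace r} = r {j \brace r} + {j \brace r-1}$; invoking the inductive hypothesis at $r$ and at $r-1$ converts this piece to $r {k+1 \brace r+1} + {k+1 \brace r}$. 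Adding the two pieces and applying \eqref{Stirling 2 rec} once more gives $(r+1){k+1 \brace r+1} + {k+1 \brace r} = {k+2 \brace r+1}$, completing the induction.

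Neither approach presents a substantive obstacle; the only care required is in tracking the index ranges and in invoking the inductive hypothesis at two different values of $r$. I would likely present the combinatorial proof for brevity and note the inductive derivation as an alternative consistent with the rest of the paper.
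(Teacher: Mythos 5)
Both of your routes are correct. The inductive one is essentially the paper's proof: the paper also splits $\binom{k+1}{i}$ via Pascal's rule, applies the recursion \eqref{Stirling 2 rec}, and invokes the induction hypothesis at both $r$ and $r-1$ (the paper phrases this as multiplying the hypothesis by $r+1$ and subtracting, then identifying the leftover as ${k+1\brace r}$, but the bookkeeping is the same and, like yours, it implicitly relies on the claim holding for all $r$ at lower $k$). Your combinatorial argument, by contrast, is a genuinely different and cleaner route that the paper does not take: conditioning on the block containing $k+1$ immediately produces the sum without any recursion manipulation, and it makes the constraint $i\geq r$ transparent. The combinatorial proof buys brevity and conceptual clarity; the paper's algebraic proof has the virtue of staying within the purely recursive setup the author established for Stirling numbers (and thus avoids relying on the set-partition interpretation at all, which may have been a deliberate stylistic choice given that this lemma is used inside purely formal manipulations of falling-factorial coefficients).
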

\begin{proof}
 Let $F(k+1,r+1)$ denote the left side of equation \eqref{Stirling 2 sum}. 
 For a fixed $r\geq 0$, we use induction on $k$. The lemma is true when $k=r$. Assume the lemma is true for some $k \geq r$. Apply the identity 
\[
{k +1 \choose i } = {k \choose i }+{k \choose i-1 }
\]
to sum in $F(k+2,r+1)$ and re-arrange to obtain 
\begin{equation} \label{Stirling 2 sum re-arranged}
F(k+2,r+1)={r-1 \brace r-1 } {k \choose r-1 }+ \sum_{i=r}^{k+1}\left( {i \brace r }+{i+1 \brace r }\right) {k \choose i }.
\end{equation}

Now multiply equation \eqref{Stirling 2 sum} by $(r+1)$ and subtract the result from equation \eqref{Stirling 2 sum re-arranged} to obtain 
\begin{equation} \label{Stirling 2 result}
{r-1 \brace r-1 } {k \choose r-1 }+ \sum_{i=r}^{k+1}\left( {i \brace r }+{i+1 \brace r } - (r+1){i \brace r}\right) {k \choose i }. 
\end{equation}
From equation \eqref{Stirling 2 rec} the coefficient of $\displaystyle {k \choose i}$ is $\displaystyle {i \brace r-1}$, and by the induction hypothesis again,
equation \eqref{Stirling 2 result} is equal to $\displaystyle {k +1\brace r}$. 
We have thus shown that 
\[
F(k+2,r+1) - (r+1){k+1 \brace r+1} =  {k +1\brace r}
\]
and combining with equation \eqref{Stirling 2 rec} we have  
\[
F(k+2,r+1) = {k+2 \brace r+1}.
\]
This completes the proof.
\end{proof}

\section{Theorems about set partitions and subsets} \label{s set theorems}
In this section we prove Theorem \ref{t nu} used in the proof of Theorem \ref{t main multiset}, and also Theorems \ref{t Stirling set} and Theorems \ref{t deriv set} which are used to prove Theorem \ref{t nu}.

\begin{theorem} \label{t nu}
For integers $1 \leq k \leq N$, an indeterminate $\nu$, and $N$ indeterminates $x_i$, $1 \leq i \leq N$,
\begin{align}
&\frac{1}{(k-1)!}\sum_{r=0}^{k-1} (-1)^{k-1-r}{k-1 \choose r} ((r+1)\nu-1+\sum_{i=1}^N x_i)_{N-1} \label{nu 1}\\
=&\nu^{k-1}\sum_{s \in S(N,k)} \prod_{i=1}^{k} (\nu-1+\sum_{m\in s_i} x_m)_{|s_i|-1}. \label{nu 2}
\end{align}
\end{theorem}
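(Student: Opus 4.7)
The plan is to view both sides of the stated identity as polynomials in $\nu$ over $\mathbb{Z}[x_1,\ldots,x_N]$ and match coefficients of $\nu^j$ for each $j$. Set $X = \sum_{i=1}^N x_i$.

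First I would expand the LHS. Substituting $Y = (r+1)\nu$ into the defining relation $(Y+X-1)_M = \sum_{j}{M \brack j}_X Y^j$ gives
\[
((r+1)\nu - 1 + X)_{N-1} = \sum_{j=0}^{N-1}{N-1 \brack j}_X\,(r+1)^j\,\nu^j.
\]
Swapping the $r$- and $j$-sums and applying Lemma~\ref{l Stirling 2 a n} (with $a=k-1$, $n=j$) collapses the inner signed binomial sum into a Stirling number of the second kind, yielding
\[
\text{LHS} = \sum_{j=k-1}^{N-1}{j+1 \brace k}\,{N-1 \brack j}_X\,\nu^j.
\]
For the RHS I would expand each of the $k$ factors by the same rule,
\[
(\nu - 1 + X_{s_i})_{|s_i|-1} = \sum_{r_i=0}^{|s_i|-1}{|s_i|-1 \brack r_i}_{X_{s_i}}\,\nu^{r_i},
\]
with $X_{s_i} = \sum_{m\in s_i}x_m$, and distribute over the product and over $s\in S(N,k)$. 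Matching the coefficient of $\nu^j$ on each side then reduces the theorem to the combinatorial identity
\[
{j+1 \brace k}\,{N-1 \brack j}_X \;=\; \sum_{s\in S(N,k)}\ \sum_{\substack{r_1+\cdots+r_k=\,j-k+1\\ 0\le r_i\le |s_i|-1}}\ \prod_{i=1}^k {|s_i|-1 \brack r_i}_{X_{s_i}}, \qquad (\star)
\]
for each $k-1\le j\le N-1$.

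To establish $(\star)$ I would invoke the derivation-theoretic results of Section~\ref{s set theorems}, specifically Theorem~\ref{t Stirling set} and Theorem~\ref{t deriv set}, together with the Stirling identities in Lemmas~\ref{Stirling X Y}, \ref{l Stirling gf}, and \ref{l Stirling 2 sum}. The boundary cases $k=1$ (both sides equal $(\nu-1+X)_{N-1}$) and $k=N$ (both sides equal $\nu^{N-1}$, via the top coefficient of the $(N-1)$-th finite difference of a degree-$(N-1)$ polynomial in $r$) serve as useful sanity checks. The main obstacle is $(\star)$: its left-hand side is manifestly a polynomial in the \emph{single} symmetric quantity $X$, whereas the right-hand side is naturally written in the individual block sums $X_{s_i}$, so the non-symmetric ``cross terms'' produced by expanding $\prod_i{|s_i|-1 \brack r_i}_{X_{s_i}}$ must cancel in a highly structured way once summed over $s$. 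Controlling those cancellations cleanly is what the arbitrary-derivation framework is designed to handle, and it is why the author first builds Theorems~\ref{t Stirling set} and~\ref{t deriv set} before attacking Theorem~\ref{t nu}.
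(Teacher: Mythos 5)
Your expansion of line \eqref{nu 1} in powers of $\nu$ is exactly the paper's first step (it reproduces expression \eqref{nu power left}), and your reduction of the theorem to the coefficient identity $(\star)$ is correct — I checked it against small cases and it holds. But the proposal stops precisely where the real work begins: $(\star)$ is asserted, not proved, and ``invoke Theorem \ref{t Stirling set} and Theorem \ref{t deriv set}'' does not close the gap, because Theorem \ref{t Stirling set} is a statement about sums over \emph{subsets} $w \subset [1,M]$ — that is, over decompositions into exactly two complementary blocks — whereas $(\star)$ sums over $k$-part set partitions. There is no direct way to feed a $k$-block partition sum into that theorem, so as written the citation does no work.

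The missing idea is an induction on $k$ that converts the $k$-block sum into a two-block subset sum. The paper isolates the block $w$ containing the element $N$, applies the theorem for $k-1$ parts to the complement $[1,N]\setminus w$ (this is where the inductive hypothesis enters and where the prefactor $\nu^{k-1}$ gets peeled apart as $\nu\cdot\nu^{k-2}$), and is then left with a sum over subsets $w \ni N$ of a product of exactly two bracket symbols — which is what Theorem \ref{t Stirling set} evaluates. Two further ingredients are needed to make the coefficients match: Lemma \ref{Stirling X Y}, to separate out the powers of $x_N$ (the blocks of the partition of $[1,N]$ and the blocks of the induced partition of $[1,N-1]$ carry different variable sets, so one must compare coefficients of $x_N^{m_0}\nu^{n_0}$ rather than of $\nu^{n_0}$ alone), and Lemma \ref{l Stirling 2 sum}, to verify the resulting Stirling-number recursion ${n_0+1 \brace k}=\sum_{j}{j+1\brace k-1}{n_0 \choose j+1}$ that ties the $k$-part and $(k-1)$-part coefficients together. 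Without this inductive scaffolding your identity $(\star)$ remains unproved, so the proposal is incomplete at its central step.
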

\begin{proof}
We use induction on $k$. The theorem is true when $k=1$ for then both sides are equal to 
\[
(\nu-1+\sum_{i=1}^N x_i)_{N-1}.
\]
Assume that the theorem is true for all values less than some $k \geq 2$. 
We will expand both lines \eqref{nu 1} and \eqref{nu 2} into powers of $\nu$ and $x_N$, and show that the coefficients are equal.

First we expand line \eqref{nu 1} as 
\[
\sum_{n=0}^{N-1} \nu^{n}{N-1 \brack n}_{\sum_{i=1}^N x_i} \frac{1}{(k-1)!}\sum_{r=0}^{k-1}(-1)^{k-1-r}{k-1 \choose r} (r+1)^n
\]
which by Lemma \ref{l Stirling 2 a n} we may write as 
\begin{equation} \label{nu power left}
\sum_{n=k-1}^{N-1} \nu^{n}{N-1 \brack n}_{\sum_{i=1}^N x_i} {n+1\brace k}.
\end{equation}
Next, by the induction hypothesis, line \eqref{nu 2} is equal to
\[
\nu \sum_{w \subset[1,N], N\in w} 
\left(\sum_{n=k-2}^{N-|w|-1} \nu^{n}{n+1\brace k-1} {N-|w|-1 \brack n}_{\sum_{i \in w^c} x_i}\right)(\nu-1+\sum_{i \in w}x_i)_{|w|-1}.
\]
We expand the second factor of this sum in powers of $\nu$ to obtain
\[
\nu \sum_{w \subset[1,N], N\in w} 
\left(\sum_{n=k-2}^{N-|w|-1} \nu^{n}{n+1\brace k-1} {N-|w|-1 \brack n}_{\sum_{i \in w^c} x_i}\right)\left(\sum_{n=0}^{|w|-1} \nu^{n}{|w|-1 \brack n}_{\sum_{i \in w} x_i}\right).
\]
In the above sum, the coefficient of $\nu^{n_0}$ is
\begin{align} \label{nu power right}
\sum_{j=k-2}^{n_0-1}{j+1\brace k-1} \sum_{w \subset [1,N], N \in w}   {N-|w|-1 \brack j}_{\sum_{i \in w^c} x_i} {|w|-1 \brack n_0-1-j}_{\sum_{i \in w} x_i}
\end{align}
Now we consider the coefficient of $x_N^{m_0} \nu^{n_0}$. In a term of the inner sum above, consider the right factor. 
By Lemma \ref{Stirling X Y}, we have 
\[
{|w|-1 \brack n_0-1-j}_{\sum_{i \in w} x_i} = \sum_{m_0=0}^{|w|-n_0+j} x_N^{m_0} {n_0-1-j+m_0 \choose m_0}  {|w|-1 \brack n_0-1-j+m_0}_{\sum_{i \in w, i\neq N} x_i}. 
\]
Therefore the coefficient of $x_N^{m_0}$ in expression \eqref{nu power right} is 
\begin{equation} \label{x nu power right}
\sum_{j=k-2}^{n_0-1} {j+1\brace k-1}  {n_0-1-j+m_0 \choose m_0}     \sum_{v \subset [1,N-1]}   {|v^c|-1 \brack j}_{\sum_{i \in v^c} x_i}{|v| \brack n_0-1-j+m_0}_{\sum_{i \in v} x_i}
\end{equation}
where we have reindexed using the set $v = w \setminus \{N\}$, and $v^c$ denotes the complement of $v$ in $[1,N-1]$. We must prove that the above expression is equal to the coefficient of $x_N^{m_0} \nu^{n_0}$ in expression \eqref{nu power left}, which by Lemma \eqref{Stirling X Y} is
\begin{equation} \label{x nu power left}
 {n_0+1\brace k} {N-1 \brack n_0+m_0}_{\sum_{i=1}^{N-1} x_i} {n_0+m_0 \choose m_0}.
\end{equation}
Applying Theorem \ref{t Stirling set} to the inner sum of expression \eqref{x nu power right} with $a-1=j, b=n_0-1-j+m_0$, and $N-1=M$ yields 
\begin{equation} \label{x nu power right yield}
\sum_{j=k-2}^{n_0-1} {j+1\brace k-1}  {n_0-1-j+m_0 \choose m_0}    {n_0+m_0 \choose j+1}{N-1 \brack n_0+m_0}_{\sum_{i =1}^{N-1} x_i}.
\end{equation}
Equating expressions \eqref{x nu power left} and \eqref{x nu power right yield} and then simplifying the binomial coefficients shows that it is sufficient to prove the equation
\begin{align*}
{n_0+1 \brace k}=\sum_{j=k-2}^{n_0-1}{j+1\brace k-1}{n_0 \choose j +1}.\\
\end{align*}
This follows from Lemma \ref{Stirling 2 sum}. This complete the proof.
\end{proof} 
 
\begin{theorem} \label{t Stirling set} For integers $M\geq 0; a\geq 1$, and $b \geq 0$, and indeterminates $x_i, 1 \leq i \leq M$,
\begin{align}\label{Stirling set}
\sum_{w \subset [1,M]}  {|w^c|-1 \brack a-1}_{\sum_{i \in w^c} x_i} {|w| \brack b}_{\sum_{i \in w} x_i}= {a+b \choose a }{M \brack a+b}_{\sum_{i =1}^{M} x_i}
\end{align}
where $w^c$ denotes the complement of $w$ in $[1,M]$.
\end{theorem}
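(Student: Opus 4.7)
My plan is to prove Theorem \ref{t Stirling set} by reformulating it as a polynomial identity in two auxiliary variables and then proving the reformulated form by induction on $M$.

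\textbf{Reformulation.} Since $\displaystyle {N \brack r}_X = [Y^r](Y+X-1)_N$ by definition, the LHS of the theorem is $[Y^{a-1}Z^b]F(Y,Z)$ where
\[
F(Y,Z) := \sum_{w \subset [1,M]}(Y + X_{w^c} - 1)_{|w^c|-1}(Z + X_w - 1)_{|w|},
\]
with $X_S := \sum_{i \in S}x_i$ and $(Y + X - 1)_{-1}$ treated as $0$, so the $w = [1,M]$ term is discarded. A direct expansion of $\displaystyle {M \brack a+b}_T$ as a sum over $(M-a-b)$-subsets of $[1,M]$ shows that $\binom{a+b}{a}{M \brack a+b}_T = [Y^{a-1}Z^b]\sum_{k \geq 1}Y^{k-1}{M \brack k}_{Z+T}$ with $T = X_{[1,M]}$. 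Hence Theorem \ref{t Stirling set} is equivalent, after multiplication by $Y$, to the polynomial identity
\[
Y \cdot F(Y, Z) = (Y + Z + T - 1)_M - (Z + T - 1)_M,
\]
which the plan is to prove by induction on $M$.

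\textbf{Inductive step.} The base cases $M = 0, 1$ are immediate. For the inductive step, split $F(Y,Z)$ according to whether $M \in w$ or $M \notin w$; parameterizing by $v \subset [1, M-1]$ with $p = |v^c|$, $q = |v|$ (so $p + q = M - 1$), $A = Y + X_{v^c} - 1$, $B = Z + X_v - 1$ (all relative to $[1,M-1]$), gives
\[
F(Y,Z) = \sum_{v \subset [1,M-1]}\bigl[(A + x_M)_p (B)_q + (A)_{p-1}(B + x_M)_{q+1}\bigr].
\]
Expand both shifted falling factorials by Vandermonde $(U + x_M)_n = \sum_k \binom{n}{k}(U)_{n-k}(x_M)_k$, and similarly expand $(Y+Z+T-1)_M$ and $(Z+T-1)_M$ via Vandermonde in $x_M$. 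The task then reduces to matching $(x_M)_k$ coefficients on the two sides for each $k$. The $k=0$ match uses the factorization $(A)_p(B)_q + (A)_{p-1}(B)_{q+1} = (A)_{p-1}(B)_q[(A-p+1)+(B-q)] = (A)_{p-1}(B)_q(Y+Z+T-x_M-M)$ (valid for $p \geq 1$), isolates the boundary term $v = [1,M-1]$ which contributes $(Z+T-x_M-1)_{M-1}$, and combines via the IH $YF_{M-1} = (Y+Z+T-x_M-1)_{M-1} - (Z+T-x_M-1)_{M-1}$ together with $(U)_{M-1}(U-(M-1)) = (U)_M$ to produce exactly the target $(x_M)_0$ piece $(Y+Z+T-x_M-1)_M - (Z+T-x_M-1)_M$. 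The $k=1$ match uses $\binom{p}{1} + \binom{q+1}{1} = p+q+1 = M$ to collapse the $(x_M)_1$-coefficient of $F$ to $M \cdot F_{M-1}$, matching $\binom{M}{1}[(Y+Z+T-x_M-1)_{M-1} - (Z+T-x_M-1)_{M-1}]$ via the IH.

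\textbf{Main obstacle.} The $(x_M)_k$ match for $k \geq 2$ is the delicate part. The coefficient of $(x_M)_k$ in $F$ equals $\sum_v\bigl[\binom{p}{k}(A)_{p-k}(B)_q + \binom{q+1}{k}(A)_{p-1}(B)_{q+1-k}\bigr]$, a sum involving falling-factorial shifts outside the form of the inductive hypothesis. I expect to handle this by either strengthening the IH to cover a two-parameter family of sums $\sum_v(A)_{p+\alpha}(B)_{q+\beta}$ indexed by shifts $(\alpha, \beta)$, with an analogous Vandermonde-telescoping closed form for each pair, or else by invoking the arbitrary-derivation result Theorem \ref{t deriv set} from Section \ref{s set theorems} to collapse the shifted sums directly against the $(x_M)_k$-piece of the Vandermonde-expanded target. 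Once this matching is completed for every $k$, the identity $Y F = (Y+Z+T-1)_M - (Z+T-1)_M$ follows, establishing the theorem.
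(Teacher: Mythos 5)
Your reformulation is correct: Theorem \ref{t Stirling set} is indeed equivalent to the single polynomial identity $Y\,F(Y,Z)=(Y+Z+T-1)_M-(Z+T-1)_M$ with $T=\sum_{i=1}^M x_i$, and your $k=0$ and $k=1$ matchings check out. But the proof is not complete, and the gap is not a minor one: for $M\geq 3$ the $(x_M)_k$ coefficients with $k\geq 2$ are nontrivial, and that is exactly where the content of the theorem lives. Neither proposed repair works as stated. For the first, the shifted sums $\sum_v (A)_{p+\alpha}(B)_{q+\beta}$ do not admit closed forms in $Y+Z+T'$ alone: already for $M-1=1$ and $(\alpha,\beta)=(0,0)$ the sum is $(Y+x_1-1)+(Z+x_1-1)=Y+Z+2x_1-2$, which is not a combination of falling factorials of $Y+Z+x_1$, so a strengthened induction over such a family has no closed form to induct on; moreover your $C_k$ carries the $v$-dependent weights $\binom{p}{k}$ and $\binom{q+1}{k}$, so it is not even a pure shifted sum. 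For the second, Theorem \ref{t deriv set} does not apply coefficient-by-coefficient in $(x_M)_k$, and you do not indicate how the dictionary between falling factorials and derivations would be set up at that level.

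The irony is that your reformulated identity is, verbatim, an instance of Theorem \ref{t deriv set}, so your ``Plan B'' applied globally (rather than per $(x_M)_k$-coefficient) finishes the proof in one step and makes the whole Vandermonde expansion unnecessary. Take $R=\mathbb{Z}[t]$, $\delta=d/dt$, $f_A=t^{n_A}$, $f_B=t^{n_B}$, $f_i=t^{n_i}$ with $n_A,n_B,n_i$ non-negative integers; since $\delta^{m}(t^{c})|_{t=1}=(c)_m$, the term of \eqref{deriv set} indexed by $w$ evaluates at $t=1$ to $n_A\,(n_A-1+\sum_{i\in w^c}n_i)_{|w^c|-1}\,(n_B+\sum_{i\in w}n_i)_{|w|}$, the $w=[1,M]$ term evaluates (via the convention $\delta^{-1}f_A^{(1)}=f_A$) to $(n_B+T)_M$, and the right side to $(n_A+n_B+T)_M$. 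Setting $Y=n_A$, $Z=n_B+1$, $x_i=n_i$ this is precisely $Y\,F(Y,Z)+(Z+T-1)_M=(Y+Z+T-1)_M$ on the Zariski-dense set of integer points, hence as polynomials. This route is genuinely different from, and shorter than, the paper's proof, which extracts the coefficient of each monomial $\prod_i x_i^{n_i}$, converts to ordinary Stirling numbers, and reaches Theorem \ref{t deriv set} only after summing over $M$ against $t^{M-l}$ and recognizing the generating functions $(-\ln(1-t))^n/n!$ of Lemma \ref{l Stirling gf}. So: replace your $(x_M)_k$ induction by this single application of Theorem \ref{t deriv set} and you have a complete (and nicer) proof; as written, the argument has a hole at $k\geq 2$.
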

\begin{proof}
Fix an integer $l \geq 0$ and integers $n_i \geq 0$ for $1 \leq i \leq l$. Consider a term of the form 
 \begin{equation}\label{Stirling set term}
 \prod_{i=1}^l x_i^{n_i}.
 \end{equation}
 The coefficient of this term on the left side of equation \eqref{Stirling set} is 
 \begin{align} \label{Stirling set left}
 &\sum_{w \subset [1,M], v \subset [1,l], v \subset w, v^c \subset w^c}  \left( \frac{(\sum_{i\in v^c} n_i)!}{\prod_{i\in v^c} i!}\right) {a-1+\sum_{i\in v^c}n_i \choose a-1} (-1)^{|w^c|-a}{|w^c| \brack a+\sum_{i\in v^c}n_i}  \\
  &\times \left(\frac{(\sum_{i\in v} n_i)!}{\prod_{i\in v} i!}\right) {b+\sum_{i\in v}n_i \choose b}(-1)^{|w|-b}{|w|+1 \brack b+1+\sum_{i\in v}n_i}  \nonumber
 \end{align}
 where we have used Lemma \ref{Stirling X Y} and equation \eqref{nu Stirling 0}, and where $v^c$ denotes the complement of $v$ in $[1,l]$. The number of sets $w$ of order $j$ containing such $v$ is
 \[
 {M-l \choose j-|v|}
 \] 
 so we rewrite \eqref{Stirling set left} as 
 \begin{align} \label{Stirling set left rewrite}
 &\sum_{v \subset [1,l]}  \sum_{j=0}^M \left( \frac{(\sum_{i\in v^c} n_i)!}{\prod_{i\in v^c} i!}\right) {a-1+\sum_{i\in v^c}n_i \choose a-1} (-1)^{M-j-a}{M-j \brack a+\sum_{i\in v^c}n_i}  {M-l \choose j-|v|} \\
  &\times \left(\frac{(\sum_{i\in v} n_i)!}{\prod_{i\in v} i!}\right) {b+\sum_{i\in v}n_i \choose b}(-1)^{j-b}{j+1 \brack b+1+\sum_{i\in v}n_i}  \nonumber
 \end{align}
 
 Likewise the coefficient of the term \eqref{Stirling set term} on the right side of equation \eqref{Stirling set} is 
  \begin{equation} \label{Stirling set right}
 {a+b \choose a } \left( \frac{(\sum_{i=1}^l n_i)!}{\prod_{i=1}^l i!}\right) {a+b+\sum_{i=1}^l n_i \choose a+b}(-1)^{M-a-b}{M+1 \brack a+b+1+\sum_{i=1}^l n_i}.  
 \end{equation}
 Now equate expression \eqref{Stirling set left} and \eqref{Stirling set right} and simplify to obtain 
\begin{align} \label{Stirling set equate}
 &\sum_{v \subset [1,l]}  \sum_{j=0}^M a (a-1+\sum_{i\in v^c}n_i)! \frac{{M-j \brack a+\sum_{i\in v^c}n_i}}{((M-j) - |v^c|)!} (b+\sum_{i\in v}n_i)!\frac{{j+1 \brack b+1+\sum_{i\in v}n_i}}{(j-|v|)!}   \\ 
  &=  (a+b+\sum_{i=1}^l n_i)!\frac{{M+1 \brack a+b+1+\sum_{i=1}^l n_i}}{(M-l)!}.
   \end{align}
 Now multiply both sides of the above equation by $t^{M-l}$ and sum over $M\geq 0$ to yield 
\begin{align*} 
 &\sum_{v \subset [1,l]}  a (\frac{d}{dt})^{|v^c|}\left(\frac{(-\ln(1-t))^{a+\sum_{i\in v^c}n_i}}{a+\sum_{i\in v^c}n_i} \right) (\frac{d}{dt})^{|v|+1} \left(\frac{(-\ln(1-t))^{b+1+\sum_{i\in v}n_i}}{b+1+\sum_{i\in v}n_i}  \right)  \\ 
  &=  (\frac{d}{dt})^{l+1}\left(\frac{(-\ln(1-t))^{a+b+1+\sum_{i=1}^l n_i}}{a+b+1+\sum_{i=1}^l n_i} \right)
   \end{align*}
   where we have used Lemma \ref{l Stirling gf}. After applying one derivative from each power of $\frac{d}{dt}$, we see that the above equation is equivalent to 
\begin{align*}
 &\sum_{v \subset [1,l]}   (\frac{d}{dt})^{|v^c|-1}\left(\frac{a(-\ln(1-t))^{a-1+\sum_{i\in v^c}n_i} }{1-t}\right) (\frac{d}{dt})^{|v|} \left(\frac{(-\ln(1-t))^{b+\sum_{i\in v}n_i}}{1-t}  \right)  \\ 
  &=  (\frac{d}{dt})^{l}\left(\frac{(-\ln(1-t))^{a+b+\sum_{i=1}^l n_i}}{1-t} \right)
   \end{align*} 
   where in the case $v^c$ is empty we denote 
   \[
    (\frac{d}{dt})^{-1}\left(\frac{a(-\ln(1-t))^{a-1} }{1-t}\right) = (-\log(1-t))^a.
   \]
   This equation follows from Theorem \ref{t deriv set} with $R$ the ring of power series in $t$, $\delta$ differentiation with respect to $t$, and 
   \begin{align*}
   f_A = (-\log(1-t))^a\\ 
   f_B =  \frac{(-\log(1-t))^b }{1-t}\\ 
   f_i = (-\log(1-t))^{n_i}.
   \end{align*}
   This completes the proof.
\end{proof}

 \begin{theorem} \label{t deriv set}
 Let $R$ be a commutative ring and let $\delta \colon R \rightarrow R$ be a derivation. 
 For an integer $M\geq 0$ and elements $f_A, f_B, f_i \in R, 1 \leq i \leq M$, 
 \begin{equation}\label{deriv set}
\sum_{ w \subset [1,M] }\delta^{|w^c|-1} (f_A^{(1)} \prod_{i \in w^c} f_i ) \delta^{|w|} (f_B \prod_{i \in w} f_i )  = \delta^M (f_A f_B \prod_{i=1}^M f_i)
  \end{equation}
  where $w^c$ denotes the complement of $w$ in $[1,M]$; $\displaystyle   f_A^{(1)}$ denotes $\delta f_A$; and in the case $w^c$ is empty, $\displaystyle   \delta^{-1} f_A^{(1)}$ denotes $f_A$.
 \end{theorem}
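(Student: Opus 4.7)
The plan is induction on $M$. The base case $M=0$ is immediate from the stated convention $\delta^{-1}(f_A^{(1)}) = f_A$, which gives $f_A f_B = \delta^0(f_A f_B)$ directly.

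For the inductive step I would split the sum at the new element $M+1$. Writing $U = w \cap [1,M]$, we get $L_{M+1} = T_1 + T_2$, where $T_1$ collects the terms with $M+1 \in w$ (producing an extra $\delta$ on the second factor) and $T_2$ the terms with $M+1 \in w^c$ (producing an extra $f_{M+1}$ inside the first factor's derivative). On $T_1$, apply the product rule $X \cdot \delta Y = \delta(XY) - (\delta X)\, Y$ with $X = \delta^{|U^c|-1}(f_A^{(1)}\prod_{i \in U^c} f_i)$ and $Y = \delta^{|U|}(f_B f_{M+1}\prod_{i \in U} f_i)$. The sum $\sum_U XY$ is precisely the left side of \eqref{deriv set} applied to $(f_A;\, f_B f_{M+1};\, f_1,\ldots,f_M)$, which by the inductive hypothesis equals $\delta^M(f_A f_B f_{M+1}\prod_{i=1}^M f_i)$; hence
\[ L_{M+1} = \delta^{M+1}\Big(f_A f_B \prod_{i=1}^{M+1} f_i\Big) + (T_2 - T_1'), \]
where $T_1' := \sum_U \delta^{|U^c|}(f_A^{(1)}\prod_{U^c} f_i)\,\delta^{|U|}(f_B f_{M+1}\prod_U f_i)$. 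Everything reduces to showing $T_2 = T_1'$.

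Both $T_2$ and $T_1'$ fit the template $\Phi(g_1, g_2) := \sum_{U \subset [1,M]} \delta^{|U^c|}(g_1\prod_{U^c} f_i)\,\delta^{|U|}(g_2\prod_U f_i)$, with two different splittings of the same product $f_A^{(1)} f_B f_{M+1}$: namely $T_2 = \Phi(f_A^{(1)} f_{M+1}, f_B)$ and $T_1' = \Phi(f_A^{(1)}, f_B f_{M+1})$. The crux is the auxiliary identity
\[ \Phi(g_1, g_2) = \Phi(g_1 g_2, 1), \qquad (\star) \]
under which both $T_2$ and $T_1'$ collapse to $\Phi(f_A^{(1)} f_B f_{M+1}, 1)$ and are automatically equal. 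I would prove $(\star)$ by a secondary induction on the number of $f_i$'s: expand the $g_2$-factor on each side via Leibniz as $\delta^n(g_2 X) = \sum_j \binom{n}{j}(\delta^j g_2)\,\delta^{n-j}(X)$, then re-index using $\binom{|U|}{j} = \#\{S \subset U : |S| = j\}$. Both sides of $(\star)$ then produce, as the coefficient of $\delta^j g_2$, the expression $\sum_{|S|=j} \Phi^{[1,M]\setminus S}(g_1 \prod_S f_i, 1)$ for every $j \geq 1$ (by the inner inductive hypothesis applied to the strictly smaller index set $[1,M] \setminus S$), while for $j = 0$ both sides reduce to $\Phi(g_1, 1)$ by the $U \leftrightarrow U^c$ symmetry of $\Phi(\cdot, 1)$.

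The main obstacle is locating and proving the symmetry $(\star)$; once it is in hand, the outer induction is routine integration-by-parts bookkeeping. An alternative route that avoids $(\star)$ is to expand both sides of the theorem by repeated Leibniz in the universal polynomial ring $\mathbb{Z}[f_j^{(k)}]$ and match monomial coefficients; this reduces the theorem to the combinatorial identity $\sum_{T \subset [1,M],\, \sum_{i \in T} n_i = |T| - n_A} (|T|-1)!\,(M-|T|)! = M!/n_A$ for $n_A \geq 1$, which in turn follows from the Dvoretzky--Motzkin Cycle Lemma applied to $y_i = 1 - n_i$ (whose partial sums in any permutation of $[1,M]$ must first attain $n_A$ at a unique position, giving the aggregate count $M!$).
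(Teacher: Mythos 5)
Your proof is correct, and it takes a genuinely different route from the paper's. You both induct on $M$, but the inner mechanisms diverge completely. The paper's proof is a monomial-coefficient argument: for a fixed term $f_A^{(n_A)} f_B^{(n_B)}\prod f_i^{(n_i)}$ it computes the coefficient on each side, observes that at least one $n_i$ must vanish (by degree count), and then shows that the resulting numerical identity is itself a specialization of the theorem at smaller $M$, taking $R=\mathbb{Z}[t]$, $\delta=d/dt$, $f_\bullet=t^{n_\bullet}$, evaluated at $t=1$ --- a nice self-referential reduction. Your proof stays entirely at the level of ring elements: peel off $f_{M+1}$ to split $L_{M+1}=T_1+T_2$, run a summation-by-parts on $T_1$ to absorb the main term via the inductive hypothesis (with $f_B\mapsto f_Bf_{M+1}$), and reduce everything to the auxiliary symmetry $(\star)$, namely that $\Phi(g_1,g_2)=\sum_{U}\delta^{|U^c|}(g_1\prod_{U^c}f_i)\,\delta^{|U|}(g_2\prod_U f_i)$ depends only on the product $g_1 g_2$. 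Your proof of $(\star)$ by a Leibniz expansion of the $g_2$-slot is correct; after re-indexing $\binom{|U|}{j}$ as a count of $j$-subsets, the coefficient of $g_2^{(j)}$ on the left is $\sum_{|S|=j}\Phi^{[1,M]\setminus S}(g_1,\prod_S f_i)$ and on the right is $\sum_{|S|=j}\Phi^{[1,M]\setminus S}(g_1\prod_S f_i,1)$, which agree by the inner induction for $j\ge 1$ and coincide verbatim for $j=0$ (no symmetry argument is actually needed there; both are $\Phi(g_1,1)$ on the nose). Two small points: $(\star)$ as stated requires a multiplicative identity, whereas the paper explicitly avoids assuming $1\in R$; this is harmless if you pass to the unitalization $R\oplus\mathbb{Z}$ with $\delta(1)=0$, since the theorem's statement only involves elements of $R$. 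What each approach buys: yours isolates a clean, reusable structural fact ($\Phi$ sees only $g_1 g_2$) and avoids all multinomial bookkeeping; the paper's coefficient-matching has the advantage of producing, as a byproduct, the numerical identity it needs as an instance of the theorem itself, which keeps the proof self-contained without an auxiliary lemma. Your sketched ``alternative route'' via the Cycle Lemma is also different from the paper's coefficient argument (which uses the self-specialization trick rather than a ballot-type count), but since you only outline it, I am evaluating the $(\star)$-based proof, which stands.
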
 
\begin{proof} Note that we do not require $R$ to contain 1. For an element $f \in R$ and integer $n\geq 0$, we denote
\[
n f = \sum_{i=1}^n f
\]
and say that $n$ is the coefficient of $f$. We also denote 
 \[
 f^{(n)} = \delta^n f.
 \]
 
 We use induction on $M$. The theorem is true when $M=0$, for then $w$ is empty and we have 
 \[
\delta^{-1} (f_A^{(1)}) \delta^0 f_B = f_A f_B.
 \]
 For an $M \geq 1$, assume the theorem is true for all values less than $M$. Consider a term of the form 
 \[
 f_A^{(n_A)} f_B^{(n_B)}\prod_{i=1}^M f_i^{(n_i)}.
 \]
In order for this term to arise on the left or right side of equation \eqref{deriv set}, we must have 
\begin{equation} \label{deriv set m}
M = n_A+n_B+\sum_{i=1}^M n_i.
\end{equation}
Suppose $n_i\geq 0$ for each $i, 1 \leq i \leq M$. Then $n_A=n_B=0$ and $n_i=1$ for each $i$. It can arise from the left side only when $w = [1,M]$, and the coefficient of this term on both sides is $M!$. 
 
 Thus suppose at least one of the $n_i=0$. By symmetry, we may consider a term of the form 
 \begin{equation} \label{deriv set l term}
 f_A^{(n_A)} f_B^{(n_B)}\prod_{i=1}^l f_i^{(n_i)}
 \end{equation}
with $0\leq l < M$; $n_A,n_B\geq 0$ and $n_i \geq 1$. The coefficient of this term on the left side of equation \eqref{deriv set} is 
\begin{equation} \label{deriv set coeff left}
\sum_{v \subset [1,l]} \left(\frac{(n_A-1 + \sum_{i \in v^c} n_i)!}{(n_A-1)!\prod_{i \in v^c} (n_i)!} \right) \left(\frac{(n_B + \sum_{i \in v} n_i)!}{(n_B)!\prod_{i \in v} (n_i)!} \right) {M-l \choose n_B  - |v|+\sum_{i\in v} n_i } 
\end{equation}
 where $v^c$ denotes the complement of $v$ in $[1,l]$. Here $v \subset w$ and 
\begin{align}
n_A-1+ \sum_{i \in v^c}  {n_i} = M-|w|-1 \label{wc order} \\
 n_B+ \sum_{i \in v}  {n_i} = |w| .\label{w order} 
\end{align}
 In a term of the above sum, the first two factors are the coefficients of 
 \[
 f_A^{(n_A)} \prod_{i \in v^c} f_i^{(n_i)} \text{ and }  f_B^{(n_B)} \prod_{i \in v} f_i^{(n_i)} 
 \]
 in 
 \[
 \delta^{n_A-1+ \sum_{i \in v^c}  n_i }(f_A^{(1)}\prod_{i \in v^c} f_i^{(n_i)}) \text{ and }  \delta^{n_B+ \sum_{i \in v}  n_i }(f_B\prod_{i \in v} f_i^{(n_i)}) 
 \]
 respectively. The third factor is the number of sets $w \subset [1,M]$ that contain $v$ and do not contain $v^c$. This number is 
 \[
 {M-l\choose |w|-|v|}
 \]
and then apply line \eqref{w order}. The coefficient of term \eqref{deriv set l term} on the right side of equation \eqref{deriv set} is 
\begin{equation} \label{deriv set coeff right}
\frac{(n_A+n_B+\sum_{i=1}^l n_i)!}{n_A!n_B!\prod_{i=1}^l (n_i)!}.
\end{equation}
Equating expressions \eqref{deriv set coeff left} and \eqref{deriv set coeff right} and simplifying with equation \eqref{deriv set m}, we obtain the equation 
\begin{align*}
&\sum_{v \subset [1,l]} n_A (n_A-1+\sum_{i\in v^c} n_i)_{|v^c|-1} (n_B+\sum_{i\in v} n_i)_{|v|}   \\ 
&= (n_A+n_B+\sum_{i=1}^l n_i)_{l}
\end{align*}
where in the case $v^c$ is empty we denote
\[
n_A (n_A-1)_{-1} =1.
\]
But because $l<M$, this equation follows from the induction hypothesis with $R$ being the ring of polynomials in the variable $t$, $\delta$ being differentiation with respect to $t$, and 
$f_A = t^{n_A}, f_B = t^{n_B}$, and $f_i = t^{n_i}$ and then setting $t=1$. This completes the proof.
\end{proof}

\section{Recovery of $\mathcal{A}$-hypergeometric series}\label{s recovery}
We now prove that Theorem \ref{t main multiset} recovers the formula of Sturmfels (\cite{Sturmfels2000}, Section 3). We follow the notation of that paper (including Definition \ref{Sturmfels 3} here) with slight alterations. Note in this section that $a_i$ and their fractional powers are treated as indeterminates as done in that paper. We prove Lemmas \ref{l vi12}, \ref{l X power series} and Theorem \ref{t recovery} to prove the the recovery in Corollary \ref{c recovery}.

Fix an integer $n \geq 2$ and two integers $0 \leq i_1 < i_2 \leq n$. Set $d=i_2 - i_1$. Let $\mathcal{A}$ denote the $2\times(n+1)$ matrix
\begin{equation*}
\mathcal{A}= 
\begin{pmatrix}
0 & 1 & \cdots & n \\
1 & 1 & \cdots & 1 \\
\end{pmatrix}.
\end{equation*}
Viewing $\mathcal{A}$ as a linear transformation 
\[
\mathcal{A} \colon \mathbb{Q}^{n+1} \rightarrow  \mathbb{Q}^{2}, 
\]
let $\mathrm{Ker}(\mathcal{A})$ denote its kernel and let $\mathscr{L}$ denote 
\[
\mathscr{L} = \mathrm{Ker}(\mathcal{A}) \cap \mathbb{Z}^{n+1}.
\]
\begin{definition} \label{Sturmfels 3}
For an integer $v$ and rational number $u$, define
\[
\gamma(u,v) = 
\begin{cases}
&1  \text{ if } v =0\\
&(u)_{|v|} \text{ if } v <0\\ 
&0 \text{ if } u \in \mathbb{Z} \text{ and } 0>u\geq -v\\ 
 &\frac{1}{\prod_{i=1}^v (u+i)} \text{ otherwise}.
\end{cases}
\]
Note that if $u$ is not a negative integer, then
\[
\gamma(u,v) = \frac{u!}{(u+v)!}.
\]
For $u_i \in \mathbb{Q}, 0 \leq i \leq n$, define the series 
\[
[a_0^{u_0}a_1^{u_1}\ldots a_n^{u_n}] = \sum_{(v_0, \ldots, v_n) \in \mathscr{L}} \prod_{i=0}^n (\gamma(u_i,v_i) a_i^{u_i+v_i})
\]
which in this paper we call a ``bracket series".
Let $\xi$ denote a $d$-th root of $-1$.
Define 
\begin{equation}\label{X def}
X_{i_1,i_2, \xi} = \xi [a_{i_1}^{1/d} a_{i_2}^{-1/d}]+\frac{1}{d}\sum_{k=2}^d \xi^k [a_{i_1}^{(k-d)/d} a_{i_1+k-1} a_{i_2}^{-k/d} ]+ \frac{1}{d} [a_{i_1-1} a_{i_1}^{-1}]
\end{equation}
where $[a_{i_1-1} a_{i_1}^{-1}]$ denotes 0 if $i_1=0$.
\end{definition}

We also use the notation 
\[
\Sigma^* = \sum_{i=0, \neq i_1,i_2}^n \, \text{ and } \Pi^* = \prod_{i=0, \neq i_1,i_2}^n
\]
\begin{lemma} \label{l vi12}
 Suppose 
\[
 (v_0, \ldots, v_n) \in \mathrm{Ker}(\mathcal{A}).
\]
Then 
\begin{align}
v_{i_2} &= -\frac{1}{d}\Sigma^*  (i-i_1)v_i \label{vi12 2}\\
v_{i_1} &= -\frac{1}{d}\Sigma^* (i_2-i)v_i \label{vi12 1}\\ 
&= \frac{1}{d}(\Sigma^* (i-i_1)v_i)-\Sigma^* v_i  \label{vi12 1 rearranged}
\end{align}
\end{lemma}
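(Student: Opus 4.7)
The plan is to unpack what membership in $\mathrm{Ker}(\mathcal{A})$ means and then do linear algebra in two variables. Since $\mathcal{A}$ has rows $(0,1,\ldots,n)$ and $(1,1,\ldots,1)$, the condition $(v_0,\ldots,v_n)\in\mathrm{Ker}(\mathcal{A})$ is equivalent to the pair of scalar equations
\[
\sum_{i=0}^n i\,v_i = 0, \qquad \sum_{i=0}^n v_i = 0.
\]
Separating out the two distinguished indices $i_1$ and $i_2$ and using $\Sigma^*$ notation, these become
\[
i_1 v_{i_1} + i_2 v_{i_2} + \Sigma^* i\, v_i = 0, \qquad v_{i_1} + v_{i_2} + \Sigma^* v_i = 0.
\]

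First, I would solve for $v_{i_2}$ by eliminating $v_{i_1}$: multiply the second equation by $i_1$ and subtract from the first. This gives $(i_2 - i_1) v_{i_2} = -\Sigma^*(i - i_1) v_i$, and dividing by $d = i_2 - i_1$ yields formula \eqref{vi12 2}. Next I would solve for $v_{i_1}$ symmetrically, by multiplying the second equation by $i_2$ and subtracting from the first; this produces $(i_2 - i_1) v_{i_1} = -\Sigma^*(i_2 - i) v_i$, which after dividing by $d$ is exactly \eqref{vi12 1}.

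Finally, the third identity \eqref{vi12 1 rearranged} follows purely by algebraic rewriting of \eqref{vi12 1}: decompose $i_2 - i = (i_2 - i_1) + (i_1 - i) = d - (i - i_1)$ inside $\Sigma^*$ to get
\[
-\tfrac{1}{d}\Sigma^*(i_2 - i)v_i = -\tfrac{1}{d}\bigl(d\,\Sigma^* v_i - \Sigma^*(i - i_1)v_i\bigr) = \tfrac{1}{d}\Sigma^*(i - i_1)v_i - \Sigma^* v_i.
\]

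There is essentially no obstacle here; the lemma is a $2\times 2$ linear system over $\mathbb{Q}$ with determinant $i_2 - i_1 = d \neq 0$, so the only care needed is keeping track of signs and the distinction between the index set $\{0,\ldots,n\}\setminus\{i_1,i_2\}$ (encoded by $\Sigma^*$) and the full index set.
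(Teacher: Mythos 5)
Your proof is correct and matches the paper's approach essentially verbatim: both unpack the kernel condition into the two scalar equations, solve the resulting $2\times 2$ system for $v_{i_1}$ and $v_{i_2}$, and obtain the third identity by rewriting $i_2 - i$ via $i_1$. The only difference is that you spell out the elimination steps that the paper leaves implicit.
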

\begin{proof}
By definition 
\begin{align*}
i_1v_{i_1}+ i_2 v_{i_2} &= -\Sigma^* i v_i\\ 
v_{i_1}+ v_{i_2} &= -\Sigma^*  v_i,
\end{align*}
Solving this system for $v_{i_1}$ and $v_{i_2}$ yields equations \eqref{vi12 2} and \eqref{vi12 1}. Adding and subtracting $i_1$ to $i_2$ in the sum of \eqref{vi12 1} and simplifying yields equation \eqref{vi12 1 rearranged}. This completes the proof. 
\end{proof}


 \begin{lemma} \label{l X power series}
1. With notation as above, the series $X_{i_1,i_2,\xi}$ is a power series in the variables $a_i$ for $0\leq i \leq n, i\neq i_1,i_2$ (that is, the exponents of these variables are non-negative integers) whose coefficients depend on $a_{i_1}$ and $a_{i_2}$. 

2. The term  
\begin{equation}\label{X power series term}
\Pi^* a_i^{n_i}. 
 \end{equation}
 can appear in at most one or two bracket series in definition \eqref{X def}, depending on whether 
 \[
 \Sigma^* (i-i_1)n_i
\]
is not or is equal to $-1\mod d$, respectively.  
 \end{lemma}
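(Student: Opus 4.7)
My plan is to treat each bracket series in \eqref{X def} individually, combining the piecewise definition of $\gamma(u,v)$ from Definition \ref{Sturmfels 3} with the kernel relations of Lemma \ref{l vi12}. For Part 1, I note that for every bracket in \eqref{X def} and every index $i \neq i_1, i_2$, the corresponding exponent $u_i$ is either $0$ or $1$, so the claim reduces to a local check at each such $i$. When $u_i = 0$ and $v_i \leq -1$, the falling factorial $(0)_{|v_i|}$ contains a zero factor and so $\gamma(0, v_i) = 0$; every surviving summand then has $v_i \geq 0$, contributing a non-negative integer exponent $u_i + v_i = v_i$. When $u_i = 1$---which occurs only at the single index $i_1 + k - 1$ in the $k$-th middle bracket, or $i_1 - 1$ in the final bracket---the factor $(1)_{|v_i|}$ vanishes for $v_i \leq -2$, leaving $v_i \geq -1$ and exponent $u_i + v_i \geq 0$. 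Hence each bracket series expands as a power series in $\{a_i : i \neq i_1, i_2\}$ whose coefficients are (possibly fractional) monomials in $a_{i_1}$ and $a_{i_2}$, yielding Part 1.

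For Part 2, I fix a target monomial $\Pi^* a_i^{n_i}$ with $n_i \geq 0$ and ask, for each bracket, whether it can contribute. In every bracket the condition $u_i + v_i = n_i$ for $i \neq i_1, i_2$ determines the corresponding $v_i$ uniquely, after which \eqref{vi12 2} reads off $v_{i_2}$; membership $(v_0, \ldots, v_n) \in \mathscr{L}$ then requires $v_{i_2} \in \mathbb{Z}$, i.e.\ a congruence on $\Sigma^*(i-i_1) n_i$ modulo $d$. The three congruences produced by the three types of brackets are
\[
\Sigma^*(i - i_1) n_i \equiv 0 \pmod{d}
\]
for the first bracket $[a_{i_1}^{1/d} a_{i_2}^{-1/d}]$ (where $v_i = n_i$ throughout);
\[
\Sigma^*(i - i_1) n_i \equiv k - 1 \pmod{d}
\]
for the $k$-th middle bracket (where $v_{i_1+k-1} = n_{i_1+k-1} - 1$ shifts $\Sigma^*(i-i_1) v_i$ by $-(k-1)$);
\[
\Sigma^*(i - i_1) n_i \equiv -1 \pmod{d}
\]
for the final bracket $[a_{i_1-1} a_{i_1}^{-1}]$ (where $v_{i_1-1} = n_{i_1-1} - 1$ shifts $\Sigma^*(i-i_1) v_i$ by $+1$).

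Since $k - 1$ runs over $\{1, \ldots, d-1\}$ as $k$ runs over $\{2, \ldots, d\}$, at most one middle bracket can satisfy the integrality condition for any given residue class. Combining: residue $0$ excites only the first bracket; residue $r \in \{1, \ldots, d-2\}$ excites only the middle bracket with $k - 1 = r$; residue $-1 \equiv d - 1$ simultaneously excites both the $k = d$ middle bracket and the final bracket. This yields the dichotomy in the statement. The main obstacle is the bookkeeping of the shifts in $\Sigma^*(i-i_1) v_i$ caused by the nonzero $u_i$ in the middle and final brackets, but this reduces to the elementary identities $\Sigma^*(i-i_1)(v_i - n_i) = -(k-1)$ and $\Sigma^*(i-i_1)(v_i - n_i) = +1$ respectively.
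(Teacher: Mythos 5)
Your proof is correct and follows essentially the same route as the paper: Part 1 by observing that $\gamma(u_i,v_i)=0$ whenever $u_i+v_i<0$ (with $u_i\in\{0,1\}$), and Part 2 by solving $u_i+v_i=n_i$ for the $v_i$, applying Lemma \ref{l vi12} to read off $v_{i_2}$, and using integrality of $v_{i_2}$ to derive a congruence on $\Sigma^*(i-i_1)n_i$ modulo $d$ for each bracket. The one small caveat is the degenerate case $d=1$: there are no middle brackets, and your statement ``residue $0$ excites only the first bracket'' fails (the first and final brackets both have congruence $\equiv 0 \pmod 1$), though the lemma's conclusion still holds since $0\equiv -1\pmod 1$; the paper disposes of $d=1$ separately before running the $d\geq 2$ argument, and you should do likewise.
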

\begin{proof}
Note that each $a_i, 0\leq i \leq n, i\neq i_1,i_2$ appears in with corresponding $u_i$ equal to 0 or 1. Negative exponents can arise in two cases 
\begin{align*} 
 &a) \,\,u_i=0 \text{ and } v_i<0 \\ 
 &b) \,\,u_i=1 \text{ and } v_i<-1. 
 \end{align*}
In either case $\gamma(u_i, v_i)=0$. This proves statement 1.

In the definition \eqref{X def}, there are $d+1$ bracket series. If $d =1$, then there are exactly two bracket series in \eqref{X def}, and the theorem is true. Therefore assume $d \geq 2$. Consider the possible $u_i$ for $i \neq i_1,i_2$. We say that the $k$-th bracket series has $u_{i_1+k} =1$ for $k=0, 2\leq k\leq d$, except in the first bracket series in which $u_i=0$ for all $i \neq i_1,i_2$. 
Let $C$ denote the number 
\[
C = \Sigma^* (i-i_1)n_i.
\]
Suppose $C \equiv k-1 \mod d$ for some $k$. Now in each bracket series
\[
u_i+v_i = n_i
\]
for $i \neq i_1,i_2$, and $\vect{v} \in \mathscr{L}$. For $0 \leq j \leq d$, we thus define the $(n+1)$-tuple $\vect{v_j}$
\[
\vect{v_j} = (v_0, v_1, \ldots, v_n)
\]
where for $i \neq i_1, i_2$
\[
v_i =n_i - 1(i =i_1+j-1 \text{ and } j \neq 1)
\]
and $v_{i_1}$ and $v_{i_2}$ are determined by Lemma \ref{l vi12}:
\begin{align*}
v_{i_2} &= -\frac{1}{d}(C-j+1)\\ 
v_{i_1}&= \frac{1}{d}(C-j+1)+1(j\neq 1)-\Sigma^* n_i, 
\end{align*}
If the term \eqref{X power series term} appears in the $j$-th bracket series, then $\vect{v}_j \in \mathscr{L}$. The numbers $v_{i_2}$ and $v_{i_1}$ are integers exactly when $j\equiv k \mod d$. Thus if $k \not\equiv 0 \mod d$ the term \eqref{X power series term} may appear only in the $k$-th bracket series (assuming $1
\leq k \leq d-1$), and if $k\equiv 0\mod d$, it may appear in either the $0$-th or $d$-th bracket series. This completes the proof.

\end{proof}

\begin{theorem} \label{t recovery}
For integers $n_i \geq 0,  i \neq i_1,i_2$ not all 0, the coefficient of the term 
 \begin{equation}\label{recovery term}
\Pi^* a_i^{n_i}
 \end{equation}
in the series $X_{i_1,i_2,\xi}$ is 
\begin{equation}\label{recovery coeff} 
\frac{\xi^k}{d} (-1)^M (\frac{a_{i_1}}{a_{i_2}})^{(C+1)/d} \frac{(\frac{C+1}{d}-1)_{\sum^* n_i-1}}{a_{i_1}^{\sum^* n_i}\prod^* n_i!}
\end{equation}
where 
\begin{align*}
C &= \Sigma^* (i-i_1)n_i\\ 
&=k-1+M d
\end{align*}
for some integers $M$ and $0\leq k \leq d-1$.
\end{theorem}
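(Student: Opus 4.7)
The plan is to extract the coefficient of $\Pi^* a_i^{n_i}$ from the series $X_{i_1,i_2,\xi}$ bracket-by-bracket, using Lemma \ref{l X power series} to restrict attention to a bounded set of bracket series. Write $L = \Sigma^* n_i$ and recall $C = \Sigma^*(i-i_1)n_i = k-1+Md$.

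First I would identify which of the $d+1$ bracket series in \eqref{X def} can contribute the monomial $\Pi^* a_i^{n_i}$. By Lemma \ref{l X power series}, when $1 \leq k \leq d-1$ exactly one bracket contributes (the first bracket $\xi[a_{i_1}^{1/d}a_{i_2}^{-1/d}]$ if $k=1$, otherwise the $k$-th middle bracket), while for $k=0$ both the last bracket $\tfrac{1}{d}[a_{i_1-1}a_{i_1}^{-1}]$ and the $d$-th middle bracket $\tfrac{\xi^d}{d}[\cdots]$ are candidates. For each relevant bracket $[a_0^{u_0}\cdots a_n^{u_n}]$, I would solve $u_i + v_i = n_i$ for $i \neq i_1,i_2$ to read off all but two of the $v_i$, and then invoke Lemma \ref{l vi12} to determine $v_{i_1}, v_{i_2}$. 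A direct check confirms that these give $u_{i_1}+v_{i_1}=(C+1)/d-L$ and $u_{i_2}+v_{i_2}=-(C+1)/d$, producing the factor $(a_{i_1}/a_{i_2})^{(C+1)/d}/a_{i_1}^L$ in \eqref{recovery coeff}.

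Next I would evaluate the $\gamma$-values. For every $i \neq i_1, i_2$, the coordinate $u_i$ is $0$ or $1$; a direct inspection of the four cases of Definition \ref{Sturmfels 3} shows $\gamma(u_i,v_i) = 1/n_i!$ in all subcases, so the easy product gives $\Pi^* \gamma(u_i,v_i) = 1/\Pi^* n_i!$. The substantive part of the calculation is combining $\gamma(u_{i_1},v_{i_1})\,\gamma(u_{i_2},v_{i_2})$ into a single falling factorial. In the middle-bracket and first-bracket cases $u_{i_1}, u_{i_2}$ are non-integer rationals with denominator $d$, so neither $\gamma$ can vanish via the third clause of Definition \ref{Sturmfels 3}; writing each $\gamma$ either as a falling factorial $(u)_{|v|}$ (for $v<0$) or its reciprocal (for $v>0$), the two products interleave around the common anchor $k/d$ to yield
\[
\gamma(u_{i_1},v_{i_1})\,\gamma(u_{i_2},v_{i_2}) \;=\; \tfrac{1}{d^{\varepsilon}}(-1)^{M}\bigl((C+1)/d-1\bigr)_{L-1},
\]
where $\varepsilon=1$ for the first bracket and $\varepsilon=0$ otherwise. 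The extra $1/d$ absorbed in the first-bracket case is exactly what turns its scalar coefficient $\xi$ into the theorem's $\xi^1/d$.

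Finally I would handle the $k=0$ case by checking that one of the two candidate brackets always contributes zero via the vanishing clause $\gamma(u,v)=0$ for integer $u \in [-v,0)$: concretely, $\gamma(0,-M)=0$ when $M\geq 1$ kills the last bracket, and $\gamma(-1,1-M)=0$ when $M\leq 0$ kills the $d$-th middle bracket. After absorbing $\xi^d=-1$ into the sign $(-1)^M$, the nonzero contribution produces exactly \eqref{recovery coeff} with prefactor $\xi^0/d = 1/d$. The main obstacle is the interleaving step of part four: checking, across the various sign regimes of $v_{i_1}$ and $v_{i_2}$ (and the three bracket types), that the product of two piecewise-defined $\gamma$-values collapses cleanly to $(-1)^M((C+1)/d-1)_{L-1}$ with the prefactor $1/d^{\varepsilon}$ indicated, and that the $k=0$ cancellations dovetail correctly with the sign $\xi^d=-1$.
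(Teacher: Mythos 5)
Your proposal follows essentially the same route as the paper: the same bracket-by-bracket case split on $k$ (a single contributing bracket for $1\le k\le d-1$, two candidates with exactly one surviving for $k=0$), the same use of Lemma \ref{l vi12} to pin down $v_{i_1}$ and $v_{i_2}$, and the same intermediate quantities, including the extra $1/d$ that appears only in the first-bracket $\gamma$-product and the vanishing clauses $\gamma(0,-M)=0$ ($M\ge 1$) and $\gamma(-1,1-M)=0$ ($M\le 0$) that resolve the $k=0$ case. The only difference is in execution of the step you flag as the main obstacle: the paper collapses $\gamma(u_{i_1},v_{i_1})\gamma(u_{i_2},v_{i_2})$ into $(-1)^M((C+1)/d-1)_{\Sigma^* n_i-1}$ via the reflection identity $x!(-x)!=\pi x/\sin(\pi x)$ rather than by directly interleaving falling factorials, but both land on the same expression.
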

\begin{proof}
We first assume $d \geq 2$. Consider the term \eqref{recovery term}. 

We first compute 
\[
\Pi^* \gamma(u_i,v_i).
\]
Since $u_i+v_i = n_i$, we check in either case  $u_i=0$ or $u_i=1$ that 
\[
 \gamma(u_i,v_i) = \frac{1}{n_i!}.
\]
Next let $C$ denote the number
 \[
 C=\Sigma^* (i-i_1)n_i.
 \]
We compute $ \gamma(u_{i_1},v_{i_1}) \gamma(u_{i_2},v_{i_2})$ depending on the equivalence class of $C \mod d$.

Suppose $C \equiv k-1 \mod d$ for some $k, 2 \leq k \leq d-1$ and write 
\[
C = k-1+M d
\]
for some integer $M$. Let $v_{i_1+k-1} =n_{i_1+k-1} - 1$ and let $v_i=n_i$ for all other $i \neq i_1,i_2$. These $v_i$ determine numbers $v_{i_1}$ and $v_{i_2}$ by Lemma \ref{l vi12}: 
\begin{align*}
v_{i_2} &= -\frac{1}{d}(C-k+1)\\ 
&=-M\\
v_{i_1}&= \frac{1}{d}(C-k+1)+1-\Sigma^* n_i \\ 
&=M+1-\Sigma^* n_i.
\end{align*}
Thus $v_{i_1}$ and $v_{i_2}$ are integers and $(v_0, \ldots v_n) \in \mathscr{L}$, and so the term \eqref{recovery term} appears in the bracket series of $k$-th term in the sum from definition of $X_{i_1,i_2,\xi}$. In that series we have $\displaystyle u_{i_1}=\frac{k}{d}-1$ and $\displaystyle u_{i_2}=-\frac{k}{d}$. 
Thus 
\begin{align}
u_{i_2}+v_{i_2} &= -\frac{1}{d}(C+1) \nonumber \\ 
u_{i_1}+v_{i_1}&= \frac{1}{d}(C+1)-\Sigma^* n_i \label{u+v}
\end{align}
and 
\begin{align*}
\gamma(u_{i_2}, v_{i_2}) \gamma(u_{i_1}, v_{i_1}) &= \left( \frac{(-k/d)!}{(-k/d-M)!}\right)  \left( \frac{(k/d-1)!}{(k/d+M-\Sigma^* n_i)!}\right) \\ 
&=   \frac{(-k/d)!}{(-k/d-M)!}\frac{(k/d)!}{(k/d+M)!} \frac{d}{k}(k/d+M)_{\Sigma^* n_i} \\ 
&= \frac{\sin(\pi (k/d+M))}{\sin(\pi k/d)}\frac{(k/d+M)_{\Sigma^* n_i}}{k/d+M}\\ 
&= (-1)^M (k/d+M-1)_{\Sigma^* n_i-1}
\end{align*}
where have used the identities for $x \in \mathbb{C}, m \in \mathbb{Z}$
\begin{align*}
x!(-x)! &= \frac{\pi x}{\sin(\pi x)}\\ 
\sin(\pi(x+m)) &=\sin(\pi x) (-1)^m
\end{align*}
and the fact that $\Sigma^*n_i$ is a positive integer. 
Putting this together, we obtain that the coefficient of term \eqref{recovery term} is equal to expression \eqref{recovery coeff}. 
This proves the theorem for this case.

Now suppose $C =M d$ for some integer $M$. Let $v_i=n_i$ for all $i \neq i_1,i_2$. These $v_i$ determine numbers $v_{i_1}$ and $v_{i_2}$ by Lemma \ref{l vi12}: 
\begin{align*}
v_{i_2} &= -\frac{C}{d}\\ 
&=-M\\
v_{i_1}&= \frac{C}{d}-\Sigma^* n_i\\ 
&=M-\Sigma^* n_i.
\end{align*}
Thus $v_{i_1}$ and $v_{i_2}$ are integers and $(v_0, \ldots v_n) \in \mathscr{L}$, and so the term \eqref{recovery term} appears in the bracket series $\displaystyle [a_{i_1}^{1/d} a_{i_2}^{-1/d} ]$ in the definition of $X_{i_1,i_2,\xi}$. In that series we have $\displaystyle u_{i_1}=\frac{k}{d}-1$ and $\displaystyle u_{i_2}=-\frac{k}{d}$ thus equations \eqref{u+v} still hold. Using similar reasoning above we obtain 
\[
\gamma(u_{i_2}, v_{i_2}) \gamma(u_{i_1}, v_{i_1})= \frac{(-1)^M}{d} (1/d+M-1)_{\sum_{i}^* n_i-1}.
\]
This proves the theorem in this case.

Now suppose $C =-1+M d$ for some integer $M$. 
First let $v_{i_1+d-1} =n_{i_1+d-1}-1$ and $v_i = n_i$ fir all other $i\neq i_1,i_2$. These $v_i$ determine numbers $v_{i_1}$ and $v_{i_2}$ by Lemma \ref{l vi12}: 
\begin{align*}
v_{i_2} &= -\frac{1}{d}(C-d+1)\\ 
&=-M+1\\
v_{i_1}&= \frac{1}{d}(C-d+1)+1-\Sigma^* n_i \\ 
&=M-\Sigma^* n_i.
\end{align*}
Thus $v_{i_1}$ and $v_{i_2}$ are integers and $(v_0, \ldots v_n) \in \mathscr{L}$, and so the term \eqref{recovery term} appears in the bracket series $\displaystyle [a_{i_1+d-1}, a_{i_2}^{-1}]$ in the definition of $X_{i_1,i_2,\xi}$. In this series, $u_{i_1}=0$ and $u_{i_2}=-1$, so equations \eqref{u+v} still hold. We check that if $M\geq 1$ then

\begin{align*}
\gamma(u_{i_2}, v_{i_2}) \gamma(u_{i_1}, v_{i_1})&= \left((-1)^{M-1}(M-1)!\right) \left( \frac{1}{(M-\Sigma^*n_i)!}\right) \\ 
&=(-1)^{M-1}(M-1)_{\Sigma^* n_i-1},  
\end{align*}
and if $M\leq 0$ then $\gamma(u_{i_2}, v_{i_2}) \gamma(u_{i_1}, v_{i_1}) = 0$. Note that if $i_1=0$, then here we must have $M \geq 1$. 

The next case occurs only if $i_1\geq 1$, so assume that. Let $v_{i_1-1} =n_{i_1-1}-1$ and $v_i = n_i$ fir all other $i\neq i_1,i_2$. These $v_i$ determine numbers $v_{i_1}$ and $v_{i_2}$ by Lemma \ref{l vi12}: 
\begin{align*}
v_{i_2} &= -\frac{1}{d}(C+1)\\ 
&=-M\\
v_{i_1}&= \frac{1}{d}(C+1)+1-\Sigma^* n_i \\ 
&=M+1-\Sigma^* n_i.
\end{align*}
Thus $v_{i_1}$ and $v_{i_2}$ are integers and $(v_0, \ldots v_n) \in \mathscr{L}$, and so the term \eqref{recovery term} appears in the bracket series $\displaystyle [a_{i_1-1}, a_{i_1}^{-1}]$ in the definition of $X_{i_1,i_2,\xi}$. In this series, $u_{i_2}=0$ and $u_{i_1}=-1$, so equations \eqref{u+v} still hold. We check that if $M\leq 0$ then

\begin{align*}
\gamma(u_{i_2}, v_{i_2}) \gamma(u_{i_1}, v_{i_1})&=\left((-1)^{|M|+\Sigma^*n_i-1}(|M|+\Sigma^*n_i-1)!\right) \left( \frac{1}{|M|!}\right)\\ 
&=(-1)^{M}(M-1)_{\Sigma^* n_i-1},
\end{align*}
and if $M\geq 1$ then $\gamma(u_{i_2}, v_{i_2}) \gamma(u_{i_1}, v_{i_1})=0$.
This completes the proof for $d \geq 2$.

For $d=1$, the definition \eqref{X def} has exactly two bracket series (and only if $i_1=0$), and the proof proceeds similarly to the above case when $C =-1+M d$.
This completes the proof.
\end{proof}
\begin{corollary} \label{c recovery}
Suppose $c_{i_1}$ and $c_{i_2}$ are non-zero complex numbers and parametrize $\mathbb{C}^{n-1}$ with the coordinate variables $c_i, i \neq i_1,i_2$. 
Let 
\begin{align*}
g(z) &= 1+\frac{c_{i_2}}{c_{i_1}}z^d\\
f(z) &= g(z) + \Sigma^* c_i z^{i-i_1}
\end{align*}
and $\phi(\vect{c}) \colon U \rightarrow L$ a smooth function where $U \in \mathbb{C}^{n-1}$ is a neighborhood of $\vect{0}$ such that 
\begin{align*}
f(\phi(\vect{c})) &= 0\\
\phi(\vect{0}) &= \alpha
\end{align*}
where $g(\alpha)=0$. 
Denote 
 \[
\frac{\vect{c}}{c_{i_1}} = (\frac{c_0}{c_{i_1}}, \frac{c_1}{c_{i_1}},\ldots, \frac{c_{i_1-1}}{c_{i_1} }, \frac{c_{i_1+1}}{c_{i_1} }, \ldots, \frac{c_{i_2-1}}{c_{i_1} }, \frac{c_{i_2+1}}{c_{i_1} }, \ldots, \frac{c_n}{c_{i_1} }) \in \mathbb{C}^{n-1}.
\]

Then as formal series, the Taylor series of $\phi(\frac{\vect{c}}{c_{i_1}} )$ about $\vect{0}$ with respect to the variables $c_i, i \neq i_1,i_2$ is equal to $X_{i_1,i_2, \xi}$, where we identify $c_i=a_i$
and
\[
\alpha =\xi \frac{a_{i_1}^{1/d}}{a_{i_2}^{1/d}}.
\]
\end{corollary}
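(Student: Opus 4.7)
The plan is to apply Theorem \ref{t main multiset} directly to the function $\phi$ of the corollary and compare the resulting Taylor coefficient, term by term, with the coefficient \eqref{recovery coeff} computed in Theorem \ref{t recovery}.

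First I would match parameters with Theorem \ref{t main multiset}: in the base function $g(z) = 1 + b z^\beta$ we have $b = c_{i_2}/c_{i_1}$ and $\beta = d$, while the additional terms of $f$ carry exponents $\gamma_m = m - i_1$ for $m \neq i_1, i_2$. Since $\alpha$ is a zero of $g$, $\alpha^d = -c_{i_1}/c_{i_2}$, and a one-line manipulation gives the key identity $g'(\alpha) = b d \alpha^{d-1} = -d/\alpha$. This identity is what allows the many $\alpha$ and $d$ factors in Theorem \ref{t main multiset} to collapse to a clean closed form.

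Next, fix a nonzero multi-exponent $\vect{n} = (n_i)_{i \neq i_1, i_2}$, let $I$ be the corresponding ordered multiset with multiplicity $n_i$ at each index, and set $N = \Sigma^* n_i$ and $C = \Sigma^* (i - i_1) n_i$. Substituting $\beta = d$, $g'(\alpha) = -d/\alpha$, and $\gamma_m = m - i_1$ into Theorem \ref{t main multiset}, the $\alpha$ and sign factors combine to yield
\[
\partial(\phi, I) = \frac{\alpha^{1+C}}{d}\left(\frac{C+1}{d} - 1\right)_{N-1}.
\]
The Taylor coefficient of $\Pi^* c_i^{n_i}$ in $\phi(\vect{c}/c_{i_1})$ is then $\partial(\phi, I)$ divided by $c_{i_1}^N \cdot \Pi^* n_i!$, where the factor $c_{i_1}^{-N}$ comes from the chain-rule identity $\partial/\partial c_i = c_{i_1}^{-1}\partial/\partial a_i$ applied $N$ times. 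Writing $C = k - 1 + Md$ with $0 \leq k \leq d-1$ and using $\xi^d = -1$ gives $\alpha^{1+C} = \xi^k (-1)^M (a_{i_1}/a_{i_2})^{(C+1)/d}$, which converts the Taylor coefficient into exactly \eqref{recovery coeff}.

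Finally, I would conclude by term-by-term comparison. The constant term of $\phi(\vect{c}/c_{i_1})$ is $\alpha = \xi(a_{i_1}/a_{i_2})^{1/d}$, matching the $\vect{v} = \vect{0}$ contribution of the first bracket series $\xi[a_{i_1}^{1/d} a_{i_2}^{-1/d}]$ in \eqref{X def}. For $\vect{n} \neq \vect{0}$, Lemma \ref{l X power series} identifies the single (or pair of) bracket series of $X_{i_1, i_2, \xi}$ contributing to $\Pi^* c_i^{n_i}$, and Theorem \ref{t recovery} gives the total coefficient as precisely \eqref{recovery coeff}; the two formal series therefore agree term by term. The main obstacle is the bookkeeping of signs, $d$-th roots of $-1$, and integer-versus-fractional exponents. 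In particular, verifying $\xi^{1+C} = \xi^k(-1)^M$ uniformly across the residue-class cases of $C \pmod d$ is essentially the same case analysis already performed inside the proof of Theorem \ref{t recovery}, so once that identity is in hand, the remaining work is mechanical algebra.
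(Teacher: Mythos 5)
Your proposal is correct and is essentially the paper's proof, fleshed out: the paper's argument is the one-line instruction to compare Theorem \ref{t recovery} with Theorem \ref{t main multiset} under $\beta = d$ and $\gamma_m = m - i_1$, and you carry out exactly that comparison, including the key simplification $g'(\alpha) = -d/\alpha$, the $c_{i_1}^{-N}$ rescaling, the identity $\xi^{1+C} = \xi^k(-1)^M$, and the separate check of the constant term.
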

\begin{proof} 
This follows from comparing the coefficient of Theorem \ref{t recovery} with the formula in Theorem \ref{t main multiset} with $\beta=d$ and $\vect{\gamma}$ the $(n-1)$-tuple 
\[
\vect{\gamma}= (-i_1, 1-i_1,\ldots, -1, 1, \ldots, d-1, d+1, \ldots, n-i_1).
\]
\end{proof}

\section{Alternative proof for $\beta=1$}\label{s alt}

Suppose in Theorem \ref{t main multiset} that $\beta=1$. From the proof of that theorem, it is sufficient to prove Theorem \ref{t nu} in the case $\nu=1$. We therefore give an alternative proof of that theorem assuming $\nu=1$. A special case of Theorem \ref{t s tau} implies this result. First we prove the following two lemmas.

\begin{lemma} \label{fall identity}
For indeterminates $a$ and $b$ and an integer $n \geq 0$, 
\[
(a+b)_n = \sum_{i=0}^n {n \choose i} (a)_i(b)_{n-i}
\]
and equivalently 
\[
{a+b \choose n} = \sum_{i=0}^n {a \choose i} {b \choose n-i}.
\]
\end{lemma}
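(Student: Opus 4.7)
The plan is to establish the second identity first, since it is the classical Vandermonde--Chu convolution, and then derive the first from it using the relation $(x)_k = k!\binom{x}{k}$. For the second identity, both sides are polynomials in the indeterminates $a$ and $b$ of total degree at most $n$, so by the polynomial identity theorem it suffices to check the equation for all pairs of non-negative integers $a,b \in \mathbb{Z}_{\geq 0}$, at which point the statement reduces to the familiar combinatorial Vandermonde identity.

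For the combinatorial step I would give the standard bijective counting argument: if $a$ and $b$ are non-negative integers, then $\binom{a+b}{n}$ counts the number of $n$-element subsets of a set of size $a+b$, which we may take to be the disjoint union $A \sqcup B$ with $|A|=a$ and $|B|=b$. Partitioning such subsets $S$ according to the value $i = |S \cap A|$ (so that $|S \cap B| = n-i$) gives $\sum_{i=0}^n \binom{a}{i}\binom{b}{n-i}$, proving the equality for non-negative integer arguments and therefore, by polynomial extension, for all indeterminate $a,b$. (Equivalently, one could extract the coefficient of $x^n$ from the generating function identity $(1+x)^{a+b} = (1+x)^a(1+x)^b$, which gives the same result.)

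To pass to the first form, I would multiply the second identity by $n!$ and use $(x)_k = k!\binom{x}{k}$ and $\binom{n}{i} = n!/(i!(n-i)!)$. Then
\[
n!\binom{a+b}{n} = (a+b)_n, \qquad n!\binom{a}{i}\binom{b}{n-i} = \binom{n}{i}(a)_i(b)_{n-i},
\]
so summing the right-hand expression over $i$ yields $\sum_{i=0}^n \binom{n}{i}(a)_i(b)_{n-i} = (a+b)_n$, as claimed. No step presents a real obstacle; the only point requiring mild care is the appeal to polynomial identity in two indeterminates, which is justified because each side is manifestly a polynomial expression in $a$ and $b$, and agreement on the infinite set $\mathbb{Z}_{\geq 0}^2$ forces agreement as polynomials.
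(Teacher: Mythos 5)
Your proof is correct, but it takes a different route from the paper's. The paper argues in the opposite direction and at the level of the falling factorials directly: it applies $(\tfrac{d}{dt})^n$ to the product $t^a t^b = t^{a+b}$, once via the Leibniz rule (yielding $\sum_i \binom{n}{i}(a)_i(b)_{n-i}\,t^{a+b-n}$) and once directly (yielding $(a+b)_n\,t^{a+b-n}$), then evaluates at $t=1$; the binomial form is then the ``equivalent'' corollary obtained by dividing by $n!$. You instead prove the binomial (Chu--Vandermonde) form first, by the standard subset-counting argument for non-negative integers followed by polynomial extension, and then multiply by $n!$ to recover the falling-factorial form. Both are complete; the paper's derivative argument is shorter and avoids the specialization-and-extension step, while yours is more elementary and self-contained. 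One small point of care in your write-up: agreement on an arbitrary infinite subset of $\mathbb{C}^2$ does not in general force two polynomials to coincide (consider $ab$ versus $0$ on $\{0\}\times\mathbb{C}$); what you actually need, and what holds here, is that $\mathbb{Z}_{\geq 0}^2$ is a Cartesian product of infinite sets, so you can fix $b\in\mathbb{Z}_{\geq 0}$, conclude equality as polynomials in $a$, and then let $b$ vary. With that phrasing tightened, the argument is airtight.
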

\begin{proof}
Apply the product rule $n$ times in 
\[
(\frac{d}{dt})^{n} (t^a) (t^b)
\]
and evaluate at $t=1$
to give the right side. Taking the $n$-th derivative of 
\[
t^{a+b}
\] 
without using the product rule gives the left side. This completes the proof. 
\end{proof}

\begin{lemma} \label{l Newton coeff}
For integers $i,n \geq 0$ and an indeterminate $b$,
\begin{equation} \label{Newton coeff}
\sum_{r=0}^i (-1)^{i-r} {i \choose r}{b + r\choose n} = {b\choose n-i}
\end{equation}
\end{lemma}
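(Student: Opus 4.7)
The plan is to reduce the identity to the standard Chu--Vandermonde convolution (which is exactly Lemma \ref{fall identity} just proved) and then collapse the resulting inner sum by the familiar finite-difference computation $\sum_{r}(-1)^{i-r}\binom{i}{r}\binom{r}{j}=[i=j]$.

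First I would apply Lemma \ref{fall identity} (in its binomial form $\binom{a+b}{n}=\sum_j\binom{a}{j}\binom{b}{n-j}$) with $a=b$ and the second argument replaced by the integer $r$ to expand
\[
\binom{b+r}{n}=\sum_{j=0}^{n}\binom{b}{n-j}\binom{r}{j}.
\]
Substituting into the left-hand side of \eqref{Newton coeff} and interchanging the order of summation gives
\[
\sum_{r=0}^{i}(-1)^{i-r}\binom{i}{r}\binom{b+r}{n}
=\sum_{j=0}^{n}\binom{b}{n-j}\sum_{r=0}^{i}(-1)^{i-r}\binom{i}{r}\binom{r}{j}.
\]

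Next I would evaluate the inner sum. Using the absorption identity $\binom{i}{r}\binom{r}{j}=\binom{i}{j}\binom{i-j}{r-j}$ and re-indexing $s=r-j$,
\[
\sum_{r=0}^{i}(-1)^{i-r}\binom{i}{r}\binom{r}{j}
=\binom{i}{j}\sum_{s=0}^{i-j}(-1)^{i-j-s}\binom{i-j}{s}
=\binom{i}{j}(1-1)^{i-j},
\]
which vanishes unless $j=i$ and equals $1$ when $j=i$. Substituting this back leaves only the $j=i$ term of the outer sum, namely $\binom{b}{n-i}$, as claimed.

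The proof is essentially bookkeeping; the only ``step'' to flag is recognizing that Lemma \ref{fall identity} is already the Chu--Vandermonde identity we need, and that the inner alternating sum is the finite difference of $\binom{r}{j}$ evaluated at $r=0$, which is $[i=j]$. No obstacle of substance is expected.
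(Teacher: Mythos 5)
Your proof is correct and is essentially the paper's argument: both rest on Lemma \ref{fall identity} (Vandermonde) together with the fact that the alternating sum $\sum_{r}(-1)^{i-r}\binom{i}{r}\binom{r}{j}$ is $1$ if $j=i$ and $0$ otherwise. The only difference is that you verify this collapse explicitly via the absorption identity, whereas the paper invokes it implicitly as the standard statement that Newton-series coefficients are finite differences at $0$.
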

\begin{proof}
For any function $f(a)$, the coefficient of $\displaystyle {a \choose i}$ in the Newton series of $f(a)$ is 
\[
\sum_{r=0}^i (-1)^{i-r} {i \choose r}f(r).
\] 
Consider the expression 
\begin{equation} \label{a function}
{a+b \choose n}
\end{equation}
as a function of $a$. Therefore the left side of equation \eqref{Newton coeff} is the coefficient of $\displaystyle {a \choose i}$ in the Newton series of the function \eqref{a function}. From Lemma \ref{fall identity}, this coefficient is also equal to 
\[
{b \choose n-i}.
\]
This completes the proof.
\end{proof}

Now take the left side of Theorem \ref{t nu} when $\nu=1$ and apply Lemma \ref{l Newton coeff} with $b=\sum_{i=1}^{N} x_i$, $n=N-1$, and $i=k-1$ to yield
\[
{N-1 \choose k-1} (\sum_{i=1}^{N} x_i)_{N-1}.
\]
Rename $k$ by $N-j$. We thus seek to prove  
\begin{theorem} \label{t nu=1}For integers $0\leq j\leq N-1$ and indeterminates $x_i, 1 \leq i \leq N$, 
\begin{equation}\label{nu=1}
{N-1 \choose j} (\sum_{i=1}^{N} x_i)_{N-1}= \sum_{s \in S(N,N-j)} \prod_{i=1}^{N-j} (\sum_{m\in s_i} x_m)_{|s_i|-1}.
\end{equation}
\end{theorem}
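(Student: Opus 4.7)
The plan is to induct on $N$. The base case $N=1$ forces $j=0$, where both sides of \eqref{nu=1} equal $1$. For the inductive step with $N \geq 2$, I decompose each set partition $s \in S(N, N-j)$ according to the block $B$ containing $N$; writing $B = \{N\} \cup T$ with $T \subseteq [N-1]$, the remainder $s \setminus \{B\}$ is a set partition of $[N-1]\setminus T$ into $(N-1)-j$ parts. Applying the inductive hypothesis to this remainder yields $\binom{N-2-|T|}{j-|T|}\bigl(\sum_{m \in [N-1]\setminus T}x_m\bigr)_{j-|T|}$, so the right-hand side of \eqref{nu=1} equals
\[
\sum_{T \subseteq [N-1]} \binom{N-2-|T|}{j-|T|} \left(x_N + \sum_{m \in T} x_m\right)_{|T|} \left(\sum_{m \in [N-1]\setminus T} x_m\right)_{j-|T|},
\]
and the theorem reduces to showing this sum equals $\binom{N-1}{j}(x_1+\cdots+x_N)_j$.

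To prove this auxiliary identity, I lift it to a derivation-ring identity by working in $\mathbb{Q}[t,x_1,\ldots,x_N]$ with $\delta=d/dt$ and $f_m=t^{x_m}$, so that $\delta^k(\prod_{m \in U} f_m)|_{t=1} = (\sum_{m \in U} x_m)_k$. Expanding both sides via the multivariate Leibniz rule $\delta^n(\prod_m f_m) = \sum_{(r_m):\,\sum r_m = n}\binom{n}{r_1,\ldots,r_N}\prod_m \delta^{r_m}f_m$ and comparing the coefficient of each term $\prod_m \delta^{r_m} f_m$ (with $\sum r_m = j$) reduces the auxiliary identity, after cancellation of the common factor $1/(r_N!\prod_{m=1}^{N-1}r_m!)$, to the purely combinatorial claim
\[
\sum_{\substack{T\subseteq[N-1] \\ |T| \,=\, r_N + \sum_{m \in T} r_m}} \binom{N-2-|T|}{j-|T|}\,|T|!\,(j-|T|)! \;=\; \binom{N-1}{j}\,j!
\]
for every $(r_1,\ldots,r_N) \in \mathbb{Z}_{\geq 0}^N$ with $\sum_m r_m = j$.

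The main obstacle is proving this combinatorial identity. Its right-hand side $(N-1)_j$ counts sequences of $j$ distinct elements of $[N-1]$, suggesting a bijective argument in which the factor $|T|!(j-|T|)!$ encodes two linear orderings that together reconstruct such a sequence. Alternatively, one may attack the identity via generating functions, encoding the subset-sum constraint by an auxiliary variable and applying manipulations in the spirit of Sections~\ref{s Stirling} and~\ref{s set theorems}; I suspect the cleanest route is in fact the derivation-based argument alluded to in the paper's introduction via Theorem~\ref{t s tau}.
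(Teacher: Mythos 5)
Your reduction is carried out correctly as far as it goes: the decomposition of each $s\in S(N,N-j)$ by the block containing $N$, the application of the inductive hypothesis to the remaining partition of $[N-1]\setminus T$, and the passage to coefficients in the basis $\bigl\{\prod_m (x_m)_{r_m}\bigr\}$ (which are indeed linearly independent) are all sound. (Note you are implicitly proving the corrected statement with $\bigl(\sum_i x_i\bigr)_j$ on the left; the printed subscript $N-1$ cannot be right, since every term on the right of \eqref{nu=1} has degree $\sum_i(|s_i|-1)=j$.) The problem is that the argument stops exactly where the work begins: the combinatorial identity
\[
\sum_{\substack{T\subseteq[N-1] \\ |T| \,=\, r_N+\sum_{m\in T}r_m}}\binom{N-2-|T|}{j-|T|}\,|T|!\,(j-|T|)! \;=\; \binom{N-1}{j}\,j!
\]
is asserted, labeled ``the main obstacle,'' and then only speculated about. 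Moreover this identity is not genuinely simpler than what you started from: it is precisely your auxiliary polynomial identity evaluated at the integer point $x_m=r_m$, because the constraint $|T|=r_N+\sum_{m\in T}r_m$ is exactly the condition under which $\bigl(r_N+\sum_{m\in T}r_m\bigr)_{|T|}\bigl(\sum_{m\notin T}r_m\bigr)_{j-|T|}$ is nonzero, in which case it equals $|T|!\,(j-|T|)!$, while the right side becomes $\binom{N-1}{j}(j)_j$. So the Leibniz step has translated the problem into an equivalent form rather than reduced it, and no proof of either form is given.

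For comparison, the paper disposes of Theorem \ref{t nu=1} in one line as the $k=0$ specialization of Theorem \ref{t s tau}, so all the combinatorial content lives in that general derivation theorem (and in Theorem \ref{t deriv set}). The way the paper closes the analogous gap in the proof of Theorem \ref{t deriv set} is instructive: since $\sum_m r_m=j\leq N-1<N$, at least one $r_m$ vanishes, and the resulting numerical identity is fed back into the induction hypothesis applied to $R=\mathbb{Q}[t]$, $\delta=d/dt$, $f_m=t^{r_m}$. Your identity is true and plausibly yields to the same self-referential device, but you would need to set up and verify that secondary induction explicitly; as written, the proposal is an incomplete proof with its central identity unestablished.
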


We now present the notation and definitions to state Theorem \ref{t s tau} which will imply Theorem \ref{t nu=1}. Let $N$ and $j$ be integers $0\leq j < N$. For an integer $k$, $0\leq k \leq N-j$, let $\vect{u}$ denote a $k$-tuple of non-negative integers 
\[
\vect{u} = (u_1, \ldots, u_k). 
\]
Let $R$ be a commutative ring and $\delta: R \rightarrow R$ a derivation. Let $\vect{f}$ denote an $N$-tuple 
\[
\vect{f} = (f_1, \ldots, f_N)
\]
where $f_h \in R$. We use the same notation $f_i^{(n)}$ as in the proof of Theorem \ref{t deriv set}. Let $\tau$ be a set of $k$ integers 
\[
\tau = (\tau(1), \ldots, \tau(k))
\]
where $1 \leq \tau(i) < \tau(i+1) \leq N-j$, and denote the set of such $\tau$ by $T(N-j,k)$. We also view a $\tau$ as a strictly increasing function 
\[
\tau \colon [1,k]  \rightarrow [1,N-j].
\]
We require that $T(N-j,0)$ consists of one element, the empty set. 
Define $F(j,\vect{f}, \vect{u})\in R$ by
\begin{align}
F(j,\vect{f}, \vect{u})=&\sum_{\tau \in T(N-j,k)}\sum_{s \in S(N,N-j)}(\prod_{i=1}^k {|s_{\tau(i)}|-1 \choose u_i} \delta^{|s_{\tau(i)}|-1-u_i} \prod_{h\in s_{\tau(i)}} f_h) \nonumber \label{F def}\\
&\times (\prod_{i\notin \tau}  \delta^{|s_i|-1} \prod_{h\in s_i} f_h) \nonumber \\
\end{align}
where a term is 0 if $|s_{\tau(i)}|-1-u_i<0$, as the binomial coefficient is equal to 0.
Define the number $C(j,N,\vect{u})$ by 
\[
C(j,N,\vect{u})=\frac{(N-1)!(N-j+\sum_{i=1}^k u_i)}{(N-j-k)!(j - \sum_{i=1}^k u_i)! \prod_{i=1}^k (u_i)! \prod_{i=1}^k(k-i+1+ \sum_{g=i}^k u_g)}. 
\]

\begin{theorem} \label{t s tau}
With the above definitions
\begin{equation} \label{s u}
F(j,\vect{f}, \vect{u}) = C(j,N,\vect{u})\delta^{j - \sum_{i=1}^k u_i}\prod_{i=1}^N f_i.  
\end{equation}
\end{theorem}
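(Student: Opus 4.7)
The plan is to mimic the strategy of the proof of Theorem \ref{t deriv set}: reduce Theorem \ref{t s tau} to a polynomial identity in auxiliary variables and then prove that identity by induction on $N$.

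First, both sides of (\ref{s u}), after expanding each $\delta^n(\prod_{h \in B} f_h)$ via the Leibniz rule, become universal $\mathbb{Z}$-linear combinations of the monomials $\prod_{h=1}^N f_h^{(a_h)}$ with $\sum_h a_h = j - \sum u_\iota =: m$. The coefficient of each monomial depends only on $N$, $j$, $\vect{u}$, and $(a_h)$, not on $R$, $\delta$, or the $f_h$'s. Hence it suffices to verify (\ref{s u}) after the specialization $R = \mathbb{Q}[n_1,\ldots,n_N,t]$, $\delta = d/dt$, $f_h = t^{n_h}$, evaluated at $t = 1$. Under this specialization $\delta^b(\prod_{h\in B} f_h)|_{t=1} = (\sum_{h \in B} n_h)_b$, and (\ref{s u}) becomes the polynomial identity
\begin{align*}
\sum_{\tau \in T(N-j,k)} \sum_{s \in S(N,N-j)} &\prod_{\iota=1}^{k}\binom{|s_{\tau(\iota)}|-1}{u_\iota}\Bigl(\sum_{h \in s_{\tau(\iota)}} n_h\Bigr)_{|s_{\tau(\iota)}|-1-u_\iota}\prod_{i\notin\tau}\Bigl(\sum_{h\in s_i}n_h\Bigr)_{|s_i|-1} \\
&= C(j,N,\vect{u})\Bigl(\sum_{h=1}^N n_h\Bigr)_m
\end{align*}
in $\mathbb{Q}[n_1,\ldots,n_N]$. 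Since the products $\prod (n_h)_{a_h}$ form a $\mathbb{Q}$-basis of $\mathbb{Q}[n_1,\ldots,n_N]$, matching coefficients in this basis recovers the universal coefficient identity on $R$.

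Second, I would prove the polynomial identity by induction on $N$, with the base case $N = 1$ (where $j = 0$ and $k\in\{0,1\}$ with $u_1=0$) checked directly. For the inductive step, extract the $n_N$-dependence by writing, via Lemma \ref{fall identity},
\[
\Bigl(\sum_{h \in s_l} n_h\Bigr)_b = \sum_{c=0}^b \binom{b}{c}(n_N)_c\Bigl(\sum_{h\in s_l,\, h\neq N} n_h\Bigr)_{b-c}
\]
inside each factor involving the part $s_l \ni N$, and similarly expand the right-hand side $(\sum_h n_h)_m = \sum_c \binom{m}{c}(n_N)_c(\sum_{h<N} n_h)_{m-c}$. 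Matching coefficients of $(n_N)_c$ splits the identity into a system indexed by $c$. For each fixed $c$ the left-hand side decomposes according to whether $\{N\}$ is a singleton part or $|s_l| \geq 2$, and whether $l \in \tau$. Removing element $N$ from $s_l$ in each of the four cases produces a pair $(\tau', s')$ that indexes a smaller instance of the same polynomial identity on $[1,N-1]$, with $(j,\vect{u})$ replaced by appropriate $(j',\vect{u}')$; the induction hypothesis evaluates each case's contribution as a constant multiple of $C(j',N-1,\vect{u}')(\sum_{h<N}n_h)_{m-c}$.

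The main obstacle is the final bookkeeping: verifying that the four induction-hypothesis contributions, weighted by their Leibniz binomials from the extraction of $(n_N)_c$, sum to $C(j,N,\vect{u})\binom{m}{c}$ for every $c$. This reduces to a rational-function identity in $\vect{u}$ involving the products $\prod_\iota(k-\iota+1+\sum_{g\geq\iota}u_g)$ and the factor $(N-j+\sum u_\iota)$ appearing in $C$. The delicate point is that the ``$l \in \tau$'' cases shift one entry of $\vect{u}$ and so perturb the product in $C$'s denominator in a controlled but nontrivial way, while the ``$\{N\}$ is a singleton'' cases change $N-j$ instead. I expect this identity can be handled either by a direct telescoping argument or by a Newton-series reduction in the spirit of Lemma \ref{l Newton coeff}.
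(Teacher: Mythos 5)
Your first reduction is sound: the coefficient of each monomial $\prod_h f_h^{(a_h)}$ on both sides of (\ref{s u}) is a universal integer depending only on $N$, $j$, $\vect{u}$, $(a_h)$, so one may indeed specialize to $R=\mathbb{Q}[n_1,\dots,n_N,t]$, $\delta=d/dt$, $f_h=t^{n_h}$, set $t=1$, and prove the resulting identity in $\mathbb{Q}[n_1,\dots,n_N]$. This is correct and matches the spirit of how the paper handles matters. The gap is in the inductive step.

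The single-element extraction does not land in the required form. Concretely, suppose $N\in s_l$ with $|s_l|\geq 2$, so that $|s_l'|=|s_l|-1$ after removing $N$. In the case $l\notin\tau$, the coefficient of $(n_N)_c$ coming from the factor $\bigl(\sum_{h\in s_l}n_h\bigr)_{|s_l|-1}$ is
\[
\binom{|s_l'|}{c}\Bigl(\sum_{h\in s_l'}n_h\Bigr)_{|s_l'|-c},
\]
whereas a term of a smaller $F(j',\cdot,\vect{u}')$ must look like $\binom{|s_l'|-1}{u'}\bigl(\sum_{h\in s_l'}n_h\bigr)_{|s_l'|-1-u'}$. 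Matching the falling-factorial degree forces $u'=c-1$, but then the binomial is $\binom{|s_l'|}{c}=\frac{|s_l'|}{c}\binom{|s_l'|-1}{c-1}$, and the stray factor $|s_l'|/c$ depends on the varying part size, not on fixed data. The same mismatch appears when $l\in\tau$ with $\tau(\iota_0)=l$: after combining the two binomials one gets $\binom{u_{\iota_0}+c}{c}\binom{|s_l'|}{u_{\iota_0}+c}$ against the required $\binom{|s_l'|-1}{u_{\iota_0}+c-1}$, again leaving an $|s_l'|$-dependent ratio. Pascal's rule $\binom{|s_l'|}{c}=\binom{|s_l'|-1}{c}+\binom{|s_l'|-1}{c-1}$ repairs only one of the two resulting terms; the other has a binomial lower argument that disagrees with the falling-factorial defect, so it is still not a term of any $F$. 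Hence the ``four-case decomposition'' does \emph{not} express the coefficient of $(n_N)_c$ as a sum of constant multiples of smaller $F$'s, and the inductive hypothesis cannot be applied directly. This is not a bookkeeping nuisance but an obstruction to the single-index peeling.

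The paper's proof sidesteps this precisely by not peeling one index. Instead it uses symmetry of $F$ in the $f_i$'s (Lemma \ref{l f sym}, itself proved via the induction hypothesis on smaller $N$) to reduce to monomials of the form $\prod_{i=1}^{l}f_i^{(n_i)}$ with all $n_i>0$ and $l<N$, treats the indices $l+1,\dots,N$ as indistinguishable fillers, and counts directly the number of pairs $(s,\tau)$ compatible with a given distribution of those fillers among parts. That count is the ``claim'' in the middle of the proof; it yields an exact closed-form involving only binomial coefficients in $N$, which is then matched in the binomial basis $\binom{N-l}{j+k-a}$ and finished off by Lemma \ref{u independent} (again invoking the induction hypothesis, but at a strictly smaller $N$). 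Because the fillers are anonymous, no stray factor tied to an individual part size survives. If you want to pursue your plan you would need an additional device to cancel the $|s_l'|$-dependent factors across partitions with different $|s_l'|$; absent such a device, the route through symmetry and filler-counting as in the paper is the one that closes.
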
 
\begin{proof}
We use induction on $N$. It is straightforward to prove the theorem in the case $N=1$ and $k=0$ or $k=1$. For an $N\geq2$, assume the theorem is true for all values less than $N$. We will prove the induction step by comparing coefficients of both sides of equation \eqref{s u} of a term of the form 
\begin{equation} \label{s tau term}
\prod_{i=1}^N f_i^{(n_i)}.
\end{equation}
For this term to appear on either side of equation \eqref{s u} we must have 
\begin{equation} \label{n sum j k}
\sum_{i=1}^N n_i = j - \sum_{i=1}^k u_i,
\end{equation}
so at least one $n_i$ must be 0. By the symmetry of $F(j,\vect{f}, \vect{u})$ in $f_i$ (Lemma \ref{l f sym}), we may assume that the term \eqref{s tau term} is of the form 
\begin{equation} \label{s tau term l}
\prod_{i=1}^l f_i^{(n_i)}.
\end{equation}
Let $M(j,N,\vect{u})$ denote the set of $N$-tuples $\vect{n}$ of non-negative integers
\[
\vect{n} = (n_1, \ldots, n_N)
\]
satisfying \eqref{n sum j k}. For $X \in [1,N]$ and $\vect{n} \in M(j,N,\vect{u})$, define $\mathrm{degsum}(X, \vect{n})$ to be 
\[
\mathrm{degsum}(X,\vect{n}) = \sum_{i \in X } n_i.
\]
For an $r$-tuple of integers $\vect{m}=(m_i)_{i=1}^r$ denote the multinomial coefficient $\mathrm{mult}(\vect{m})$ 
\[
\mathrm{mult}(\vect{m}) = \frac{(\sum_{i=1}^r m_i)!}{\prod_{i=1}^r m_i!}.
\] 
Note that $\mathrm{mult}(\vect{m})$ is independent of the ordering of $\vect{m}$. 

Expand the left side of equation \eqref{s u} to obtain
\begin{equation}  \label{expansion}
\sum_{\vect{n}\in M(j,N,\vect{u})}  \sum_{\tau \in T(N-j,k)} \sum_{s \in S(N,N-j)} \mathbf{1}(s,\tau, \vect{n}, \vect{u}) \left(\prod_{i=1}^k {|s_{\tau(i)}|-1 \choose u_i} \right)\left(\prod_{i=1}^{N-j} \mathrm{mult}((n_x)_{x\in s_i}) \right)\prod_{i=1}^N f_i^{(n_i)}
\end{equation}
where $\mathbf{1}(s,\tau, \vect{n}, \vect{u}) = 1$ if the following conditions are satisfied 
\begin{align}
|s_{\tau(i)}|-1-u_i &= \mathrm{degsum}(s_{\tau(i)}, \vect{n}) \text{ for } 1 \leq i \leq k \nonumber \\ 
|s_i|-1 &= \mathrm{degsum}(s_i, \vect{n}) \text{ for } i \notin \tau \label{condition}
\end{align}
and 
$\mathbf{1}(s,\tau, \vect{n}, \vect{u}) = 0$ otherwise. 

For an integer $l,1\leq l \leq N-1$ and an integer $v$ with $0 \leq v \leq l$, let $\sigma \in S(l,v)$ 
\[
\sigma = (\sigma_1, \ldots, \sigma_v).
\] 
where we recall the subsets $\sigma_i$ are ordered such that 
\[
\min(\sigma_i) < \min(\sigma_{i+1}).
\]
For a $\tau \in T(N-j,k)$, suppose 
\begin{align*}
\tau(h) &\leq v\\
\tau(h+1) &> v.
\end{align*}
Then we write $\tau$ as 
\[
(\tau_1, \tau_2)
\]
where $\tau_1 \in T(v,h)$ and $\tau_2 \subset [v+1,N]$ and $|\tau_2|=k-h$.  
For a $\sigma \in S(l,v)$, let $S(N,N-j; \sigma) \subset S(N,N-j)$ be the subset consisting of those $s$ such that 
\[
v \leq \mathrm{length}(s)
\]
and 
\[
\sigma_i \subset s_i \quad \quad \text{ for } 1 \leq i \leq v.
\]

Now if $s \in S(N,N-j;\sigma), \mathbf{1}(s,\tau, \vect{n}, \vect{u}) \neq 0$, and $\tau=(\tau_1,\tau_2)$ as above, then the quantity 
\[
\prod_{i=1}^k {|s_{\tau(i)}|-1 \choose u_i}  
\]
is equal to 
\[
\prod_{i=1}^h {u_i + \mathrm{degsum}(\sigma_{\tau(i)},\vect{n}) \choose u_i}
\]
and depends only on $\sigma,\tau_1$ and $\vect{n}$.

Define $M(j,N,\vect{u};l) \subset M(j,N,\vect{u})$ to be the subset consisting of those $\vect{n}$ such that $n_i = 0$ for $i >l$.
In expression \eqref{expansion} consider the sub-sum 
\begin{align}
&\sum_{\vect{n}\in M(j,N,\vect{u};l)} \left(\prod_{i=1}^N f_i^{(n_i)}(t) \right )\sum_{v=1}^l \sum_{\sigma \in S(l,v)} \left ( \prod_{i=1}^v \mathrm{mult}(\{n_x\}_{x\in \sigma_i})  \right ) \sum_{h=1}^v \sum_{\tau_1 \in T(v,h)} \nonumber \\ 
&\left( \prod_{i=1}^h {u_i + \mathrm{degsum}(\sigma_{\tau_1(i)},\vect{n}) \choose u_i}  \right) \sum_{\tau_2 \subset [h+1,V], |\tau_2|= k-h}\sum_{s \in S(N,N-j;\sigma)} \mathbf{1}(s,(\tau_1,\tau_2), \vect{n}, \vect{u}). \nonumber\\ \label{subsum}
\end{align}
We claim that the innermost sum evaluates to
\begin{align*}
&\sum_{\tau_2 \subset [h+1,V], |\tau_2|= k-h} \sum_{s \in S(N,N-j; \sigma)} \mathbf{1}(s,(\tau_1,\tau_2), \vect{n}, \vect{u}) \\ 
&={N-l \choose j+v+k-h-l} {j+v+k-h-l \choose j+v-l-\sum_{i=h+1}^k u_i}  \mathrm{mult}(\epsilon)\prod_{i=h+1}^k {k-i+\sum_{g=i}^k u_g \choose u_i}
\end{align*} 
where $\epsilon$ is the $v$-tuple
\[
\epsilon = ((\mathrm{degsum}(\sigma_{\tau_1(i)}) +u_i+1-|\sigma_{\tau_1(i)}| )_{i=1}^h, ( \mathrm{degsum}(\sigma_i) +1-|\sigma_i| )_{i \notin \tau_1, i \leq v}).
\]
We prove this claim now by constructing all pairs $(s, \tau_2)$ that have a non-zero contribution to the sum. No matter what the choices of $s$ and $\tau_2$ are, the conditions \eqref{condition} imply 
\begin{align} \label{union}
|\left(\bigcup_{i=1}^v s_i \right) \cup \left(\bigcup_{i=1}^{k-h} s_{\tau_2(i)} \right)|&= \sum_{i=1}^l n_i + \sum_{i=1}^ku_i+v+k-h\\ 
&= j+v+k-h.\nonumber
\end{align}
By construction the elements $1, \ldots, l$ are in 
\[
\bigcup_{i=1}^v s_i,
\]
 so we have to choose $j+v+k-h-l$ elements from a set of order $N-l$ to be the remaining elements in the union \eqref{union}. Let $Z$ denote the set of these $j+v+k-h-l$ elements. Next, from $Z$ we must choose 
\[
j+v-l-\sum_{i=1}^h u_i
\]
elements to fill the $s_i, 1 \leq i \leq v$, and there are $\mathrm{mult}(\epsilon)$ ways to do this. Now from the set $Z_1$ consisting of
\[
k-h+\sum_{g=1}^{k-h}u_{g+h}
\]
elements not chosen yet, we construct $k-h$ sets $w_i, 1 \leq i\leq k-h$. We place the element $\min(Z_1)$ in $w_1$ and choose $u_{h+1}$ other elements to be in $w_1$. Proceeding in this manner, from a set $Z_i$ of order 
\[
k-h-i+1+\sum_{g=i}^{k-h}u_{g+h},
\] 
we construct $w_i$ to consist of $\min(Z_i)$ and a choice of $u_{i+h}$ other elements from $Z_i$. Now let $s$ be the partition consisting of the sets 
\begin{align*}
 &s_i, \text{ for } 1 \leq i \leq v \\ 
 &w_i, \text{ for } 1 \leq i \leq k-h
\end{align*}
and singleton sets, and let $\tau_2$ be determined by 
\[
\tau_2(i) = i'
\]
where $s_{i'}=w_i$. This proves the claim.

 Putting this claim into the expression \eqref{subsum} and simplifying, we get that the coefficient of $\displaystyle \prod_{i=1}^N f_i^{(n_i)}(t) $ for $\vect{n} \in M(j,N,\vect{u};l)$ is 
\begin{align}
&\frac{1}{\prod_{i=1}^k u_i! \prod_{i=1}^l n_i!} \sum_{v=1}^l \sum_{\sigma \in S(l,v)} \sum_{h=1}^v \sum_{\tau_1 \in T(v,h)} {N-l \choose j+v+k-h-l} \nonumber\\ 
&\times \frac{(j+v+k-h-l)!}{\prod_{i=1}^{k-h} (k-h-i+1+\sum_{g=h+i}^{k}u_g)} Y(\sigma, \tau_1,\vect{n},\vect{u}) \nonumber \\ \label{subsum simplified}
\end{align}
where 
\[
Y(\sigma, \tau_1,\vect{n},\vect{u}) = \prod_{i=1}^{|\tau_1|} (u_i + \mathrm{degsum}(\sigma_{\tau_1(i)}))_{|\sigma_{\tau_1(i)}|-1}  \prod_{i\notin \tau_1,1\leq  i \leq \mathrm{length}(\sigma)} ( \mathrm{degsum}(\sigma_i))_{|\sigma_i|-1}.
\]

The corresponding coefficient on the right side of equation \eqref{s u} is 
\[
C(j,N,\vect{u}) \mathrm{mult}(\vect{n})
\]
which simplifies to 
\begin{equation} \label{coeff right}
 \frac{1}{\prod_{i=1}^k u_i! \prod_{i=1}^l n_i!} {N-1 \choose j+k-1}\frac{(N-j+\sum_{i=1}^k u_i)(j+k-1)!}{\prod_{i=1}^k (k-i+1+\sum_{g=i}^k u_g)}.
\end{equation} 
Equating expressions \eqref{subsum simplified} and \eqref{coeff right} yields the equation 
\begin{align}
& \sum_{v=1}^l \sum_{\sigma \in S(l,v)} \sum_{h=1}^v \sum_{\tau_1 \in T(v,h)} {N-l \choose j+v+k-h-l} (j+v+k-h-l)! \prod_{i=1}^h (k-i+1+\sum_{g=i}^k u_g) \nonumber \\ 
&\times Y(\sigma, \tau_1,\vect{n},\vect{u})\nonumber \\ 
&= {N-1 \choose j+k-1} (N-j+\sum_{i=1}^k u_i)(j+k-1)!.\nonumber \\ \label{equate} 
\end{align}
We prove equation \eqref{equate} by comparing coefficients in the basis of the binomial functions in $N$.
To express the right side of equation \eqref{equate} in that basis, we apply the following identities for any non-negative integers $n$ and $d$
\[
{x \choose n}(x-n) = { x \choose n+1}(n+1)
\]
and 
\[
{ x \choose n} = \sum_{i=0}^d {d \choose i }{x-d \choose n-i}.
\]
Using $x=N-1, n=j+k-1,$ and $d=l-1$, we obtain that the coefficient of 
\begin{equation} \label{N bin}
{N-l \choose j+k-a}
\end{equation}
where $0\leq a \leq l$ is 
\[
(j+k-1)!\left ( (j+k){l-1 \choose a}+ (k+\sum_{i=1}^k u_k){l-1 \choose a-1 }\right).
\]
On the left side of equation \eqref{equate}, we have a contribution to the coefficient of the binomial \eqref{N bin} when $v-h=l-a$. The total coefficient is thus
\begin{align}
&  (j+k-a)!\sum_{v=l-a}^l  \left(\prod_{i=1}^{a+v-l} (k-i+1+\sum_{g=i}^k u_g)  \right) \sum_{\tau_1 \in T(v,a+v-l)} \sum_{\sigma \in S(l,v)} Y(\sigma, \tau_1,\vect{n},\vect{u}).\nonumber \\ 
\end{align}
We equate these two coefficients, divide by $(j+k-a)!$ and define $X$ by
\[
X = k+ \sum_{i=1}^{k}u_i
\]
and $U(n,\vect{u})$ by
\[
U(n,\vect{u}) = \prod_{i=1}^n (X-i+1-\sum_{g=1}^{i-1}u_g).
\]
We now view $X, u_i$ and $n_i$ as indeterminates  and see that the induction step is implied by the following polynomial identity, for all pairs of integers $0\leq a \leq l$: 
\begin{align}
&\sum_{v=l-a}^l  U(v-l+a, \vect{u}) \sum_{\sigma \in S(l,v)} \sum_{\tau_1 \in T(v,v-l+a)} Y(\sigma,\tau_1,\vect{n},\vect{u}) \nonumber \\ 
& = {l-1 \choose a}(X + \sum_{i=1}^l n_i)_a+ X{l-1 \choose a-1}(X -1+ \sum_{i=1}^l n_i)_{a-1}. \nonumber \\ \label{X U}
\end{align}


By the induction hypothesis and Lemma \ref{u independent}, we have that the left side is equal to 
\[
\sum_{v=l-a}^l \frac{(l-1)!v}{(l-a)!(l-v)!(v-l+a)!} (X)_{v-l+a}(\sum_{i=1}^l n_i)_{l-v}.
\]
Re-index this sum by $v=l-a+r$ to obtain 
\[
\sum_{r=0}^a \frac{(l-1)!(l-a+r)}{(l-a)!(a-r)!r!} (X)_{r}(\sum_{i=1}^l n_i)_{a-r},
\]
which is equal to 
\[
{l-1 \choose a} \sum_{r=0}^a {a \choose r}(X)_r (\sum_{i=1}^l n_i)_{a-r}+ {l-1 \choose a-1}  \sum_{r=0}^a {a-1 \choose r-1}  (X)_r (\sum_{i=1}^l n_i)_{a-r}.
\]
The second sum is equal to 
\[
X \sum_{r'=0}^{a-1} (X-1)_{r'}  (\sum_{i=1}^l n_i)_{a-1-r'}
\]
where have re-indexed $r=r'+1$. Now apply identity \eqref{fall identity} to obtain the right side of \eqref{X U}. This completes the induction step and the proof. 
\end{proof}

Now we can prove Theorem \ref{t nu=1}. 
\begin{proof}
Apply Theorem \ref{t s tau} with $k=0$, $R$ the ring of polynomials in $t$, $\delta$ differentiation with respect to $t$, and $f_i = t^{x_i}$ where $x_i$ is any non-negative integer, and then set $t=1$. This shows that both sides of equation \eqref{nu=1} are equal when $x_i$ are non-negative integers, and since both sides are polynomials in $x_i$, they must be equal as polynomials. This completes the proof.
\end{proof}

\begin{lemma} \label{l f sym}
Assume Theorem \ref{t s tau} is true for all values of $N$ less than some $N_0$. For any $N_0$-tuple $\vect{f}$ of elements in $R$, the expression $F(j, \vect{f},\vect{u})$ is symmetric in the $f_i$. That is, if $\vect{g}$ is a reordering of the $N_0$-tuple $\vect{f}$, then 
\[
F(j,\vect{f},\vect{u}, ) = F(j,\vect{g},\vect{u}, ).
\]
\end{lemma}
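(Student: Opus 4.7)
The plan is to prove Lemma~\ref{l f sym} by induction on $N_0$, leveraging the assumed Theorem~\ref{t s tau} for $N<N_0$ to collapse the inner sums arising in a decomposition of $F(j,\vect{f},\vect{u})$. First I would fix the first block $s_1=B$ of the ordered partition (which necessarily contains element $1$) and split the sum according to whether $1\in\tau$. In either case the residual sum over the induced partition of $[1,N_0]\setminus B$ and the reduced index set $\tau'$ is again an $F$-expression with $N'=N_0-|B|<N_0$ arguments---namely $F(j-|B|+1,(f_h)_{h\notin B},\vect{u})$ when $1\notin\tau$, and $F(j-|B|+1,(f_h)_{h\notin B},(u_2,\ldots,u_k))$ when $1\in\tau$. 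The induction hypothesis evaluates each inner $F$ as a scalar multiple of $\delta^{d}(\prod_{h\notin B}f_h)$, manifestly symmetric in $(f_h)_{h\notin B}$.

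Collecting yields
\[
F(j,\vect{f},\vect{u})=\sum_{B\ni 1}\Phi(B,\vect{f}),
\]
where each $\Phi(B,\vect{f})$ is a sum of two terms of the form $c_{|B|}\cdot\delta^{a(|B|)}(\prod_{h\in B}f_h)\cdot\delta^{b(|B|)}(\prod_{h\notin B}f_h)$ with scalar $c_{|B|}$ depending only on $|B|$, and is therefore separately symmetric in $(f_h)_{h\in B}$ and in $(f_h)_{h\notin B}$. Any permutation $\pi$ of $[1,N_0]$ fixing $1$ acts on $\{B:1\in B\}$ by $B\mapsto\pi(B)$ and, because $\Phi$ depends on $B$ only through $|B|$ and the two multisets $\vect{f}|_B,\vect{f}|_{B^c}$, preserves each summand. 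This already gives symmetry of $F$ in $(f_2,\ldots,f_{N_0})$.

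The main obstacle is extending this to every transposition swapping $1$ and some $e\in[2,N_0]$. Writing $\pi$ for this transposition, a direct computation (substituting $\vect{f}\mapsto\pi(\vect{f})$ into the displayed formula and using the change of variables $B\mapsto B\triangle\{1,e\}$ on the summation) gives $F(j,\pi(\vect{f}),\vect{u})=\sum_{B\ni e}\Phi(B,\vect{f})$, so symmetry under $\pi$ reduces to the identity $\sum_{B\ni 1}\Phi(B,\vect{f})=\sum_{B\ni e}\Phi(B,\vect{f})$. After canceling the common part $\sum_{B\supset\{1,e\}}\Phi$---each such $\Phi$ being already $(1,e)$-invariant since $\{f_1,f_e\}\subset\vect{f}|_B$ and $\Phi$ is symmetric in $\vect{f}|_B$---this reduces further to
\[
\sum_{B\ni 1,\,e\notin B}\Phi(B,\vect{f})=\sum_{B\ni e,\,1\notin B}\Phi(B,\vect{f}).
\]
I plan to verify this via the involution $B\mapsto B\triangle\{1,e\}$: although matched summands differ, the Leibniz expansions of their differences telescope when summed across the sizes $|B|=1,\ldots,N_0-1$, the cancellation being governed by recursion identities among the constants $C(j',N',\vect{u}')$ produced by Theorem~\ref{t s tau} for $N'<N_0$. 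Verifying this combinatorial identity is the technical heart of the proof; once it is in hand, the transposition symmetries combine with the symmetry on $(f_2,\ldots,f_{N_0})$ to yield full invariance of $F$ under $S_{N_0}$.
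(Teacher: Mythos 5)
Your decomposition of $F(j,\vect{f},\vect{u})$ by the block $B=s_1$ containing the index $1$ is sound: splitting on whether $1\in\tau$ does exhibit the residual sums as $F(j-|B|+1,\vect{f}|_{B^c},\vect{u})$ and $F(j-|B|+1,\vect{f}|_{B^c},(u_2,\ldots,u_k))$, and invoking Theorem \ref{t s tau} for $N_0-|B|<N_0$ correctly collapses each of these to a scalar times $\delta^{(\cdot)}\prod_{h\notin B}f_h$. This gives symmetry in $(f_2,\ldots,f_{N_0})$ cleanly, arguably more cleanly than the paper does. But the proof is not complete: the identity $\sum_{B\ni 1,\,e\notin B}\Phi(B,\vect{f})=\sum_{B\ni e,\,1\notin B}\Phi(B,\vect{f})$, which you correctly isolate as what remains, is exactly where the entire difficulty of the lemma is concentrated, and you offer only a plan for it (an involution $B\mapsto B\triangle\{1,e\}$ whose matched summands you concede do not agree, followed by an unspecified ``telescoping'' governed by unnamed ``recursion identities among the constants $C(j',N',\vect{u}')$''). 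Since the matched summands genuinely differ whenever $f_1\neq f_e$, nothing cancels termwise, and verifying the identity would require expanding in the monomials $\prod_i f_i^{(n_i)}$ and proving a nontrivial combinatorial identity in the $C$'s --- work of essentially the same magnitude as the lemma itself. As written, the proposal is a reduction, not a proof.

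For comparison, the paper proceeds by adjacent transpositions $f_r\leftrightarrow f_{r+1}$ and a case analysis on where $r$ and $r+1$ sit in the partition $s$: in every case but one, terms are matched in pairs $(s,\tau)\leftrightarrow(s',\tau')$ with equal contributions. The single hard case (both $r=\min(s_a)$ and $r+1=\min(s_{a+1})$ with both blocks selected by $\tau$ and carrying distinct weights $u_y\neq u_{y+1}$) is handled by comparing the coefficients of $f_r^{(n_r)}f_{r+1}^{(n_{r+1})}$ and $f_r^{(n_{r+1})}f_{r+1}^{(n_r)}$; after rewriting, the only non-manifestly-symmetric piece is recognized as $F(|I|-2,\vect{g},\emptyset)|_{t=1}$ for a ground set of size $|I|<N_0$, which the induction hypothesis on Theorem \ref{t s tau} evaluates to $\binom{|I|}{|I|-2}\bigl(u_y+u_{y+1}+\sum_{i\in I}n_i\bigr)_{|I|-1}$, visibly invariant under $n_r\leftrightarrow n_{r+1}$. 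If you want to salvage your approach, the analogous move would be to expand your residual identity in monomials and show that the resulting coefficient identity is itself an instance of Theorem \ref{t s tau} on fewer than $N_0$ elements; until that step is carried out, the argument has a genuine gap.
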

\begin{proof}
It is sufficient to prove that $F(j, \vect{f},\vect{u},)$ is invariant under each  transposition $f_r(t) \leftrightarrow f_{r+1}(t)$. We fix an $r$ and consider a set partition $s$ in the sum \eqref{F def}. If $r$ and $r+1$ are in the same subset for this $s$, then the contribution is invariant under $r \leftrightarrow r+1$. Therefore suppose that $r \in s_a$ and $r+1 \in s_b$ for some $a\ \neq b$. If at least one the following  
\[
r \neq \min(s_a) \text{ or } r+1 \neq \min(s_b)
\]
holds, then let $s'$ denote the set partition obtained from $s$ by switching $r$ and $r+1$. Then 
\begin{align*}
s_a' = \{r+1\}\cup (s_a \setminus \{r\})\\ 
s_b' = \{r\}\cup (s_b \setminus \{r+1\}),
\end{align*}
so the expression 
\begin{equation} \label{D sym}
D(s,\tau, j, \vect{f}, \vect{u})+ D(s',\tau, j, \vect{f}, \vect{u})
\end{equation}
is invariant under $r \leftrightarrow r+1$.  

Now suppose that both 
\begin{equation} \label{both min}
r = \min(s_a) \text{ and } r+1 = \min(s_b)
\end{equation}
(so $b=a+1$) and that both $\{a, a+1\} \notin \tau$. Then the expression \eqref{D sym} is again invariant under $r \leftrightarrow r+1$. If $a \in \tau$ and $a+1 \notin \tau$, then let $\tau'$ be the function obtained from $\tau$ by making $\tau'(\tau^{-1}(a)) = a+1$. Then 
\begin{equation} \label{D sym 2}
D(s,\tau, j, \vect{f}, \vect{u})+ D(s',\tau', j, \vect{f}, \vect{u})
\end{equation}
 invariant under $r \leftrightarrow r+1$. The case $a +1\in \tau$ and $a \notin \tau$ is handled similarly.
 
Thus suppose equations \eqref{both min} hold and that both $a$ and $a+1 \in \tau$. Now if $u_{\tau^{-1}(a)} \neq u_{\tau^{-1}(a+1)}$, then the expression \eqref{D sym 2} is not invariant under $r \leftrightarrow r+1$, so we prove invariance another way: given integers $n_r$ and $n_{r+1}$, in the expansion of $F(j, \vect{f},\vect{u})$, we prove that the coefficient  of
\[
f_{r}^{(n_r)} f_{r+1}^{(n_{r+1})}
\]
is equal to the coefficient of 
\[
f_{r}^{(n_{r+1})}f_{r+1}^{(n_r)}.
\]
For an $s$ satisfying condition \eqref{both min}, let $E(s,a)$ denote the set of set partitions $s'$ obtained from $s$ such that 
\begin{align*}
s_a ' \cup s_{a+1}' &= s_a \cup s_{a+1}\\ 
s_i ' &= s_i \text{ for } i \neq a, a+1 \\ 
r &\in s_a' \text{ and } r+1 \in s_{a+1}'.
\end{align*}
We claim the expression 
\[
\sum_{s' \in E(s,a)}D(s',\tau, j, \vect{f}, \vect{u})
\]
is invariant under $f_r \leftrightarrow f_{r+1}$. In the expansion of this sum, consider a term of the form 
\[
m(\vect{f}, \vect{n},I)=\prod_{i \in I} f_i^{(n_i)} \prod_{i \notin I} f_i
\]
where 
\[
I \subset (s_a \cup s_{a+1}) \text{ such that } r,r+1 \in I,
\]
and 
\[
\vect{n} = (n_i)_{i \in I}
\]
is some $|I|$-tuple of integers where $n_i >0$ if $i \neq r,r+1$. Denote  $\tau^{-1}(a)$ by $y$. Now $D(s',\tau, j, \vect{f}, \vect{u})$ has a non-zero contribution to the coefficient of $m(\vect{f}, \vect{n},I)$ only if 
\begin{align}
|s_a'| - 1 - u_{y} &= \sum_{i \in I \cap s_a'} n_i  \nonumber \\ 
|s_{a+1}'| - 1 - u_{y+1} &= \sum_{i \in I \cap s_{a+1}'} n_i. \nonumber \\ \label{s' condition}. 
\end{align}
Adding the above two equations gives 
\[
|s_a' \cup s_{a+1}'|-2-u_y - u_{y+1} = \sum_{i \in I} n_i.
\]
Since at least one of $n_r$ and $n_{r+1}$ is greater than zero by assumption, and the remaining $n_i$ are positive by assumption, we therefore have the the coefficient of $m(\vect{f}, \vect{n},I)$ can be non-zero only if 
\[
|s_a' \cup s_{a+1}'|-2-u_y - u_{y+1} \geq |I|-1
\]
and thus 
\[
N\geq |s_a' \cup s_{a+1}'|>|I|.
\]  
Define
\[
\mathbf{1}(s',\tau, \vect{u}, \vect{n},I)=1
\]
if the conditions \eqref{s' condition} are satisfied and 0 otherwise. The coefficient of $m(\vect{f}, \vect{n},I)$ is thus 
\begin{equation} \label{m coeff}
B(s,\tau,\vect{u}) \sum_{s' \in E(s,a)}{|s_a'|-1 \choose u_y} {|s_{a+1}'|-1 \choose u_{y+1}} \mathrm{mult}((n_i)_{i \in I \cap s_a'})  \mathrm{mult}((n_i)_{i \in I \cap s_{a+1}'}) \mathbf{1}(s',\tau, \vect{u}, \vect{n},I).
\end{equation}
 where $B(s,\tau,\vect{u})$ is some number depending only on $s,\tau$ and $u$. 
 
Now let $S(I, 2;r,r+1)$ denote the set of set partitions of $I$ into two subsets such that $r$ and $r+1$ are not in the same set. It follows that the sum in expression \eqref{m coeff} is 
\begin{align*}
&\sum_{\sigma \in S(I,2;r,r+1)}{u_y+ \mathrm{degsum}(\sigma_1, \vect{n}) \choose u_y} {u_{y+1}+\mathrm{degsum}(\sigma_2, \vect{n}) \choose u_{y+1}}   \\ 
\times & \mathrm{mult}(\mathrm{degsum}(\sigma_1, \vect{n})+u_y+1-|\sigma_1|,\mathrm{degsum}(\sigma_2, \vect{n})+u_{y+1}+1-|\sigma_2|) \\ 
\times &\mathrm{mult}((n_i)_{i \in \sigma_1})  \mathrm{mult}((n_i)_{i \in \sigma_2})
\end{align*}
which simplifies to 
\[
\frac{(\sum_{i\in I}n_i+u_y+u_{y+1}+2-|I|)!}{u_y!u_{y+1}! \prod_{i\in I}n_i!} \sum_{\sigma \in S(I,2; r,r+1)} (u_y+ \mathrm{degsum}(\sigma_1, \vect{n}))_{|\sigma_1|-1}(u_{y+1}+ \mathrm{degsum}(\sigma_2, \vect{n}))_{|\sigma_2|-1}.
\]
The number multiplying this sum is clearly invariant under $n_r \leftrightarrow n_{r+1}$, so we must prove that the sum is also invariant. This sum is equal to 
\begin{align}
 &\sum_{\sigma \in S(I,2)} ( \mathrm{degsum}(\sigma_1, \vect{\mu}))_{|\sigma_1|-1}( \mathrm{degsum}(\sigma_2, \vect{\mu}))_{|\sigma_2|-1} \label{sum 1}\\ 
&-\sum_{\sigma \in S(l,2) \setminus S(I,2; r,r+1)} ( \mathrm{degsum}(\sigma_1, \vect{\mu}))_{|\sigma_1|-1}(u_{y+1}+ \mathrm{degsum}(\sigma_2, \vect{\mu}))_{|\sigma_2|-1}\label{sum 2}
\end{align}
where $\vect{\mu}$ is obtained from $\vect{n}$ by setting 
\begin{align*}
\mu_r &= u_y+ n_r \\ 
\mu_{r+1} &= u_{y+1}+ n_{r+1} \\
\mu_i &= n_i \text{ for all other } i \in I. 
\end{align*}
The sum at line \eqref{sum 2} is invariant under $n_r \leftrightarrow n_{r+1}$ because $r$ and $r+1$ are always in the same subset. The sum at line \eqref{sum 1} is equal to 
\begin{equation} \label{F g}
F(|I|-2,\vect{g},\emptyset)|_{t=1}
\end{equation}
 where $R$ is the ring of polynomials in $t$, $\delta$ is differentiation with respect to $t$, $g_1 = t^{n_r+u_y}, g_2= t^{n_{r+1}+u_{y+1}}$, and $g_i = t^{n_{i'}}$ where $i'$ is the $i$-th smallest element of $I$, for $i> 2$. Since $|I|<N_0$, we may apply the assumption on Theorem \ref{t s tau} to see that expression \eqref{F g} is equal to 
 \[
 {|I| \choose |I|-2} (u_y+u_{y+1}+\sum_{i\in I} n_i)_{|I|-1} 
 \]
which is invariant under $n_r \leftrightarrow n_{r+1}$. This completes the proof. 
 \end{proof}

\begin{lemma} \label{u independent}Let $l$ be a positive integer, and assume Theorem \ref{t s tau} is true for $N = l$. Then with notation as in the proof of that theorem, the polynomial
\begin{equation} \label{with u}
\sum_{v=l-a}^l  U(v-l+a, \vect{u}) \sum_{\sigma \in S(l,v)} \sum_{\tau_1 \in T(v,v-l+a)} Y(\sigma,\tau_1,\vect{n},\vect{u}) 
\end{equation}
is equal to 
\begin{equation} \label{without u}
\sum_{v=l-a}^l \frac{(l-1)!v}{(l-a)!(l-v)!(v-l+a)!} (X)_{v-l+a}(\sum_{i=1}^l n_i)_{l-v}.
\end{equation}
\end{lemma}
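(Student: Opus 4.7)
The plan is to use Theorem \ref{t s tau} for $N=l$ (valid by the lemma's hypothesis) to evaluate the inner double sum on the left side of \eqref{with u}, reducing the lemma to an identity in $\vect{u}$ and $X$ alone, which I then prove by induction.

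First, I expand each factor $(u_i+\mathrm{degsum}(\sigma_{\tau_1(i)}))_{|\sigma_{\tau_1(i)}|-1}$ of $Y$ using Lemma \ref{fall identity} in the form $(u_i+y)_m=\sum_{g_i\geq 0}{m \choose g_i}(u_i)_{g_i}(y)_{m-g_i}$. Interchanging the sum over auxiliary tuples $\vect{g}=(g_1,\ldots,g_q)$ (with $q=v-l+a$) with the sum over $(\sigma,\tau_1)$, the inner sum is exactly $F(l-v,\vect{f},\vect{g})|_{t=1}$ in the setup $R=\mathbb{Q}[t]$, $\delta=d/dt$, $f_h=t^{n_h}$. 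Theorem \ref{t s tau} evaluates this as $C(l-v,l,\vect{g})\,(n_1+\cdots+n_l)_{l-v-\sum g_i}$. Substituting back and reindexing by $p:=l-v-\sum g_i$, both sides of the lemma become polynomials in $\vect{n}$; matching coefficients of $(\sum n_i)_p$ and canceling the common factor $\frac{(l-1)!(l-p)}{(l-a)!\,p!}$ reduces the lemma (for each $p\in[0,a]$, equivalently each $m:=a-p\in[0,a]$) to the $\vect{n}$-free identity
\[
\sum_{q=0}^{m} U(q,\vect{u})\sum_{\substack{\vect{g}\in\mathbb{Z}_{\geq 0}^q\\ \sum g_i=m-q}}\prod_{i=1}^q\frac{(u_i)_{g_i}}{g_i!}\cdot\frac{1}{\prod_{i=1}^q\bigl(q-i+1+\sum_{j=i}^q g_j\bigr)}={X \choose m}
\]
in which $\vect{u}$ and $X$ are treated as independent indeterminates.

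Denote the left-hand side by $L_m(\vect{u},X)$ and its inner sum by $A_q(m,\vect{u})$. The proof of $L_m(\vect{u},X)={X \choose m}$ proceeds by induction on $m$. Two recursions drive it: peeling off $g_1$ and using the fact that the $i=1$ factor of the denominator equals $q+\sum_j g_j=m$ independently of $g_1$ gives $m\,A_q(m,\vect{u})=\sum_{g_1\geq 0}{u_1 \choose g_1}A_{q-1}(m-1-g_1,\vect{u}_{\geq 2})$; reindexing $i\mapsto i-1$ in the definition of $U$ gives $U(q,\vect{u})=X\cdot U(q-1,\vect{u}_{\geq 2})\bigr|_{X\to X-1-u_1}$. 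Combining these with the induction hypothesis $L_{m'}(\vect{u}',X')={X' \choose m'}$ for all $m'<m$,
\[
\sum_{q\geq 1}U(q,\vect{u})A_q(m,\vect{u})=\frac{X}{m}\sum_{g_1\geq 0}{u_1 \choose g_1}{X-1-u_1 \choose m-1-g_1}=\frac{X}{m}{X-1 \choose m-1}={X \choose m}
\]
by Vandermonde's identity (a special case of Lemma \ref{fall identity}); the $q=0$ term vanishes for $m\geq 1$, and $L_0=1={X \choose 0}$ is immediate.

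The main obstacle is translating the combinatorial inner sum over $(\sigma,\tau_1)$ into the canonical $F$ of Theorem \ref{t s tau} and doing the subsequent bookkeeping: the Lemma \ref{fall identity} expansion produces the binomial coefficients ${|s_{\tau(i)}|-1 \choose g_i}$ and reduced falling factorials needed to match the definition of $F$, and the reindexing $p=l-v-\sum g_i$ must cleanly collapse $C(l-v,l,\vect{g})$ to the single factor $\frac{(l-1)!(l-p)}{(l-a)!\,p!}$ times $\frac{1}{\prod g_i!\,P_q(\vect{g})}$ before the reduced identity can be stated. Once that is done, the induction on $m$ collapses immediately via Vandermonde.
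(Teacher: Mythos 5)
Your proposal is correct, and it opens the same way as the paper — you expand the $(u_i + \mathrm{degsum}(\sigma_{\tau_1(i)}))_{|\sigma_{\tau_1(i)}|-1}$ factors of $Y$ via Lemma \ref{fall identity}, recognize the inner sum over $(\sigma,\tau_1)$ as $F(l-v,\vect{f},\vect{g})|_{t=1}$ with $f_h = t^{n_h}$, and invoke Theorem \ref{t s tau} for $N=l$. Where you diverge is in how the resulting expression in $\vect{u}$, $X$, $\sum n_i$ gets pinned down. The paper argues that the expression is \emph{constant} in each $u_i$, via a somewhat intricate induction (first showing $u_{a-1}$ cancels, then $u_{a-2}$ once $u_{a-1}=0$, and so on, by grouping terms that share a prefix of the composition $\vect{c}$), and only then evaluates at $\vect{u}=\vect{0}$ to read off \eqref{without u}. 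You instead match coefficients of $(\sum n_i)_p$ to reduce the whole lemma to the self-contained identity $L_m(\vect{u},X)={X\choose m}$, which you then prove by induction on $m$ using two observations: the $i=1$ denominator factor of $A_q$ is $q+\sum g_j=m$ independently of $\vect{g}$ (giving the recursion $mA_q(m,\vect{u})=\sum_{g_1}{u_1\choose g_1}A_{q-1}(m-1-g_1,\vect{u}_{\geq 2})$), and the shift property $U(q,\vect{u})=X\cdot U(q-1,\vect{u}_{\geq 2})|_{X\to X-1-u_1}$; the step then collapses by Vandermonde. This is a genuinely different and, I think, tidier closing argument — it computes the value of the expression directly rather than first proving it is free of $\vect{u}$ and then evaluating at a convenient point — and the key combinatorial observation that the $i=1$ denominator factor is always $m$ is exactly what makes the recursion workable. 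The one point to make explicit in a full write-up is that after the $F$-evaluation both sides depend on $\vect{n}$ only through $\sum n_i$, so matching coefficients in the basis $\{(\sum n_i)_p\}_{p=0}^{a}$ is legitimate.
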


\begin{proof}
We first prove that the polynomial \eqref{with u} is constant in each variable $u_i$. 
Applying identity \eqref{fall identity} to each factor of 
\[
(u_i+\mathrm{degsum}(\sigma_{\tau(i)})_{|\sigma_{\tau(i)}|-1}
\]
in $Y(\sigma,\tau,\vect{n},\vect{u})$, 
 we obtain 
\begin{align}
& \sum_{\tau \in T(v, v-l+a)} \sum_{\sigma \in S(l,v)} Y(\sigma,\tau_1,\vect{n},\vect{u}) \nonumber \\ 
& = \sum_{b=0}^{l-v} \sum_{\vect{c} \in C(l-v-b,v-l+a)}\prod_{i=1}^{v-l+a} (u_i)_{c_i}  \nonumber \\ 
&\sum_{\tau_1 \in T(v, v-l+a)} \sum_{\sigma \in S(l,v)}  \prod_{i=1}^{v-l+a} {|\sigma_{\tau_1(i)}|-1  \choose c_i}( \mathrm{degsum}(\sigma_{\tau_1(i)}))_{|\sigma_{\tau_1(i)}|-1-c_i}  \prod_{i\notin \tau_1,1\leq  i \leq v} ( \mathrm{degsum}(\sigma_i))_{|\sigma_i|-1} \nonumber \\ 
\label{F induction}
\end{align}
where the sum is over the set $C(l-v-b,v-l+a)$ of compositions $\vect{c}$ of $l-v-b$ into $v-l+a$ non-negative parts; that is,  
\[
\vect{c} = (c_1, \ldots, c_{v-l+a}) \text{ and } \sum_{i=1}^{v-l+a} c_i = l-v-b \text{ with } c_i \geq 0.
\]
But the last line of \eqref{F induction} is
\[
F(l-v,\vect{f}, \vect{c})|_{t=1}
\]
where $R$ is the ring of polynomials in $t$, $\delta$ is differentiation with respect to $t$, and $f_i = t^{n_i}, 1 \leq i \leq l$. Evaluating this using the assumption that Theorem \ref{t s tau} is true for $N = l$ thus gives that the right side of equation \eqref{F induction} is equal to 
\begin{align*} 
& \sum_{b=0}^{l-v} \sum_{\vect{c} \in C(l-v-b,v-l+a)} \frac{(l-1)!(l-b)}{(l-a)!b!\prod_{i=1}^{v-l+a}(v-l+a-i+1+\sum_{g=i}^{v-l+a} c_g) }\prod_{i=1}^{v-l+a} {u_i \choose c_i} (\sum_{i=1}^l n_i)_b
\end{align*}

Therefore we must prove that 
\begin{align}
&\frac{(l-1)!}{(l-a)!}\sum_{v=l-a}^l  U(v-l+a, \vect{u}) \nonumber \\
&\sum_{b=0}^{l-v} \frac{(l-b)}{b!}(\sum_{i=1}^l n_i)_b\sum_{\vect{c} \in C(l-v-b,v-l+a)} \frac{1}{\prod_{i=1}^{v-l+a}(v-l+a-i+1+\sum_{g=i}^{v-l+a} c_g) }\prod_{i=1}^{v-l+a} {u_i \choose c_i}  \label{U independence}
\end{align}
is constant in each $u_i$. Note that in the above expression the indeterminate $u_{a-r}$ appears only in terms with $l-r\leq v \leq l$. 

We prove that is constant $u_{a-1}, u_{a-2}, \ldots, u_{a-r}$ for an $1\leq r\leq a-1$. We use induction on $r$ with base case $r=1$. Now in expression \eqref{U independence}, $u_{a-1}$ appears only in terms for $v=l$ and $b=0$ as 
\[
U(a,\vect{u}){l \choose a}
\]
and in $v=l-1,b=0$ as 
\[
U(a-1,\vect{u}){l \choose a}{u_{a-1}\choose 1}
\]
for the composition $\vect{c}$ with $c_i=0, 1 \leq i \leq a-2$ and $c_{a-1}=1$. Then $u_{a-1}$ cancels out in the sum of these two expressions. This proves the base case. 

Now assume that expression \eqref{U independence} is constant in $u_{a-1}, \ldots, u_{a-r}$ for some $1 \leq r \leq a-2$. We thus may assume that $u_i=0$ for $a-r \leq i \leq a-1$, and so only those compositions with $c_i=0$ for $a-r \leq i \leq \mathrm{length}(\vect{c})$ have a non-zero contribution. Fix a $b_0, 0 \leq b_0 \leq a$ and some non-negative integers $\gamma_1, \ldots, \gamma_{a-r-2}$ such that 
\[
r +1- b_0 - \sum_{i=1}^{a-r-2}\gamma_i \geq 0
\]
where we denote the left side of the above inequality by $d$. Now consider the terms in expression \eqref{U independence} with $b=b_0$ and a composition $\vect{c}$ satisfying 
\begin{align*}
c_i &= \gamma_i, \text{ for } 1 \leq i \leq a-r-2 \\ 
c_i &= 0, \text{ for }  a-r\leq i \leq \mathrm{length}(\vect{c}).
\end{align*}
In these terms we must have 
\[
l-v-b_0=\sum_{i=1}^{a-r-2}\gamma_i + c_{a-r-1},
\]
so the quantity $c_{a-r-1}+v$ is fixed. This implies 
\[
v \leq l-b-\sum_{i=1}^{a-r-2}\gamma_i.
\]
And since the $u_{a-r-1}$ appears only in terms with $v\geq l-r-1$, we have that $v$ takes on the $d+1$ values
\[
l-r-1\leq v \leq l-b-\sum_{i=1}^{a-r-2}\gamma_i.
\]
For each of these values of $v$, let $c(v)$ denote the unique composition that satisfies the above requirements. We thus obtain
\begin{align*}
&\sum_{v=l-r-1; c=c(v)}^{l-r-1+d}  \frac{U(v-l+a, \vect{u})}{\prod_{i=1}^{v-l+a}(v-l+a-i+1+\sum_{g=i}^{v-l+a} c_g) } \prod_{i=1}^{v-l+a}{u_{i} \choose c_{i}}\\ 
&=\frac{U(a-r-1,\vect{u})\prod_{i=1}^{a-r-2} {u_i \choose \gamma_{i}}}{\prod_{i=1}^{a-r-2}(a-r-1-i+1+\sum_{g=i}^{a-r-2}\gamma_g)}\frac{1}{d!}\sum_{i=0}^d {d \choose i}(X-(a-r)+1-\sum_{g=1}^{a-r-1}u_g)_i (u_{a-r-1})_{d-i}.
\end{align*}
Now apply identity \eqref{fall identity} to the sum on the right side to see that it is constant in $u_{a-r-1}$. This completes the induction step. 

Now evaluate expression \eqref{U independence} with $u_i=0$. The only non-zero contributions come from the terms with compositions of 0, so $l-v-b=0$. This yields  expression \eqref{without u} and completes the proof.
\end{proof}

\section{Further Work}\label{s further}

\begin{itemize}

\item Study the corresponding Taylor series coefficients when $g(z)$ has more than two terms. 

\item Use the Taylor series to derive the solutions by radicals and see if radical solutions using infinite series can be obtained for higher degree. 

\item Prove a formal factorization of polynomials using these series. 

\item For integer $\beta >1$, interpret these coefficients as counting some kind of tree which generalizes trees with negative vertex degree. 

\item Prove Theorem \ref{t s tau} using induction as used in the proof of Theorem \ref{t deriv set}.

\item See if Theorem \ref{t deriv set} can be generalized using $\tau$-sequences as in Theorem \ref{t s tau} or using more elements $f_A,f_B,f_C\ldots$.

\item Find an NRS-type algorithm that evaluates the other bracket series. 
 
\item See if Theorem \ref{t s tau} can be used to give a proof for any $\beta$.

\item In the case $\beta=1$ find an algebraic proof of formal zeros in terms of trees with negative vertex degree.
 
\end{itemize}

\end{document}